\documentclass{amsart}

\usepackage{amscd,amssymb}

\newtheorem{theorem}{Theorem}[section]
\newtheorem{lemma}[theorem]{Lemma}
\newtheorem{proposition}[theorem]{Proposition}
\newtheorem{corollary}[theorem]{Corollary}

\newtheorem{conjecture}[theorem]{Conjecture}

\theoremstyle{definition}

\newtheorem{example}[theorem]{Example}

\newtheorem{remark}[theorem]{Remark}

\newtheorem{algorithm}[theorem]{Algorithm}

\DeclareMathOperator{\GL}{GL}

\DeclareMathOperator{\tr}{tr}

\newcommand{\C}{{\mathbb{C}}}
\newcommand{\op}{\mathrm{op}}

\newcommand{\KXd}{K\langle X_d\rangle}

\newcommand{\CXdCd}{{\mathbb C}\langle X_d\rangle^{C_d}}

\begin{document}

\title[On cyclic invariants of the free associative algebra]
{On cyclic invariants of the free associative algebra}

\author[Silvia Boumova, Vesselin Drensky]
{Silvia Boumova$^{1}$, Vesselin Drensky$^{2}$ }
\address{$^{1}$Faculty of Mathematics and Informatics,
Sofia University, 1164 Sofia, Bulgaria and Institute of Mathematics and Informatics,
Bulgarian Academy of Sciences,
1113 Sofia, Bulgaria\\}
\email{boumova@fmi.uni-sofia.bg}
\address{$^{2}$Institute of Mathematics and Informatics,
Bulgarian Academy of Sciences,
1113 Sofia, Bulgaria}
\email{drensky@math.bas.bg}

\subjclass[2020]{13A50, 15A72, 16S10, 16W22}
\keywords{classical invariant theory, noncommutative invariant theory, cyclic invariants, action of Kharchenko-Koryukin}

\begin{abstract}

Let $K\langle X_d\rangle$ be the free associative algebra of rank $d \geq 2$ over a field $K$.
Lane in 1976 and Kharchenko in 1978 proved that the algebra of invariants $K\langle X_d\rangle^G$ is free for any subgroup $G \leq \text{GL}_d(K)$ and any field $K$.
Later, Kharchenko introduced an additional action of the symmetric group $\text{Sym}(n)$ on the homogeneous component of degree $n$ of $K\langle X_d\rangle$,
given by permuting the positions of the variables. This equips $K\langle X_d\rangle$ with the structure of a $(K\langle X_d\rangle,\circ)$-$S$-algebra.
Then Koryukin showed that the algebra of invariants $K\langle X_d\rangle^G$ is finitely generated for every reductive group $G$ with respect to this action.

In our paper we study the algebra $K\langle x_1,\ldots,x_d\rangle^{C_d}$ of invariants of the cyclic group $C_d$, $d\geq 2$, where $K$ is an arbitrary field of characteristic 0.
We compute the Hilbert series of $K\langle x_1,\ldots,x_d \rangle^{C_d}$. When $K=\mathbb C$ we find a vector space basis of ${\mathbb C}\langle x_1,\ldots,x_d \rangle^{C_d}$
and explicitly describe the generators of ${\mathbb C}\langle x_1,\ldots,x_d \rangle^{C_d}$ as a free algebra.
Moreover, we describe a finite generating set for the $S$-algebra $({\mathbb C}\langle x_1,\ldots,x_d \rangle^{C_d},\circ)$.
We also transfer the results for $K=\mathbb C$ to the case of an arbitrary field of characteristic 0 for the $S$-algebra $(K\langle x_1,x_2,x_3 \rangle^{C_3},\circ)$
and find a minimal generating set for it as an $S$-algebra.
\end{abstract}

\maketitle

\section{\textbf{Introduction}}
Let $K$ be an arbitrary field of characteristic zero and let $K[X_d] = K[x_1,\ldots,x_d]$  be the $d$-generated polynomial algebra (the free commutative associative unital algebra of rank $d>1$) over $K$.
Classical invariant theory studies the algebra of invariants $K[X_d]^G$ which consists of all polynomials invariant under the action of a subgroup $G$ of the general linear group $\text{GL}_d(K)$.

The originals of the theory can be found in the work of Lagrange in the 1770s and Gauss in 1801.
In his {\it M\'echanique analitique} \cite{La} Lagrange considered the problem of the transformation of homogeneous polynomials by linear substitutions of the variables.
In {\it Disquititiones Arithmeticae} \cite{Ga} Gauss studied the representation of integers by quadratic binary forms and used the discriminant to distinguish nonequivalent forms.

But it is accepted that the real invariant theory began in the 1840s with works by George Boole in England and by Otto Hesse in Germany.
See Corry \cite{Co} and Wolfson \cite{Wol} for details and references.
For the history of the early invariant theory see also the survey by Meyer \cite{Mey} from 1892, the books by Klein \cite{Kl} and Weyl \cite[the beginning of Chapter II]{Wey} and
the papers by Parshall \cite{Par} and Rota \cite{Ro}. See also the references in our paper \cite{BDDK}.
In two papers published in 1841 and 1842 Boole continued the work of Lagrange on linear transformations of homogeneous polynomials.
He established the principles of invariance  and introduced a series of invariant expressions.
The ideas of Boole were further developed by Cayley in a series of papers started in 1845, who was the first to use the word ``invariant''.
Later, in 1853, Sylvester introduced the term ``syzygies'' for the relations between invariants.
The approach of Hesse was inspired by geometry: the study of critical points in plane curves with the help of the Hessian determinant.

Later, the development of the theory continued in the work of Salmon in Great Britain, Aronhold, Clebsch and  Gordan
in Germany, Brioschi and Cremona in Italy, Hermit in France.
According to Weyl the main line of the research in this period was ``the development of formal processes and the actual computation of invariants''.

The next step of the development of invariant theory was in the work of Hilbert in 1890-1892
who developed general methods to solve into affirmative many of the main problems of the theory.
His nonconstructive proofs were accepted sceptically by many of his contemporary mathematicians.
Gordan (known as ``K\"onig der Invariantentheorie'' (the king of invariant theory)) commented that
{\it ``Das ist nicht Mathematik, das ist Theologie.''} (``This is not mathematics, this is theology.''
It is not clear if Gordan really said this since the earliest reference to it is 25 years after the events and after his death,
and nor is it clear whether the quote was intended as criticism, or praise, or a subtle joke.)

A special role is played by the  14th problem of Hilbert \cite{Hi2} from his famous talk at the International Congress of Mathematicians in 1900
which was motivated by the problem whether algebras of invariants are always finitely generated.
In 1915, Emmy Noether \cite{No1} proved that if $G$ is finite, then the algebra $K[X_d]^G$ is finitely generated. (In 1926 she extended this to the case of fields of any characteristic \cite{No2}.)
For reductive linear groups, finite generation also follows from classical invariant theory. However, in general the answer to the 14th problem of Hilbert is negative.
In 1959 Nagata \cite{Na} gave a counterexample showing that the algebras $K[X_d]^G$ are not finitely generated for a class of infinite linear groups.

We now turn to the noncommutative case. Here, the commutative polynomial algebra $K[X_d]$ is replaced by its natural noncommutative counterpart:
the free associative algebra $K\langle X_d\rangle$ -- the algebra of noncommuting polynomials in $d$ variables that satisfies the same fundamental universal property:
For any associative algebra $R$ every map $X_d\to R$ can be uniquely extended to a homomorphism $K\langle X_d\rangle \to R$.

The first paper on invariant theory of groups acting on the free associative algebra was published in 1936 by Margarete Wolf \cite{Wo}. She described the symmetric polynomials in
$K\langle X_d\rangle$, i.e. the elements of the algebra of invariants $K\langle X_d\rangle^{\text{\rm Sym}_d}$, proved that $K\langle X_d\rangle^{\text{\rm Sym}_d}$ is a free algebra which is not finitely generated
and found a free generating set of this algebra.

The noncommutative analogue of the theorem of Noether on finite generation turns out to be far more restrictive.
For finite subgroups $G \leq \text{GL}_d(K)$, results of Dicks and Formanek \cite{DiFo} and Kharchenko \cite{Kh2} in the 1980s show that the algebra $K\langle X_d\rangle^G$ is finitely generated
if and only if $G$ acts by scalar multiplication on the vector space spanned by the variables $X_d$. This characterization was later generalized to infinite groups by Koryukin \cite{Kor}.

Kharchenko \cite{Kh2} 
introduced an action of the symmetric group $\text{\rm Sym}_n$ on the  homogeneous component of degree $n$ in $\KXd$, defined by permuting the positions of the variables.
In this way $K\langle X_d\rangle=(\KXd,\circ)$ becomes an $S$-algebra. Kharchenko asked whether the algebra of $G$-invariants $(K\langle X_d\rangle^G,\circ)$ is finitely generated as an $S$-algebra
for all finite groups $G$. In 1984, Koryukin \cite{Kor} proved that $(K\langle X_d\rangle^G,\circ)$ is finitely generated for every reductive group $G$.

A central problem in both commutative and noncommutative invariant theory is to describe the invariants of significant groups.
In the present paper we study the algebra ${\mathbb C}\langle X_d\rangle^{C_d}$ of invariants under the action of the cyclic group $C_d$ on the free associative algebra ${\mathbb C}\langle X_d\rangle$.
We compute its Hilbert series and construct a homogeneous set of
generators of ${\mathbb C}\langle X_d\rangle^{C_d}$ as a free algebra. Finally, we find a finite generating set of ${\mathbb C}\langle X_3\rangle^{C_3}$ as an $S$-algebra.

Related problems concerning noncommutative symmetric polynomials, i.e. for the algebra of invariants $\C\langle X_d \rangle^{\text{\rm Sym}_d}$ were considered in \cite{BDDK}
and in \cite{BDF} for the invariants ${\mathbb C}\langle x_1,x_2\rangle^{D_{2n}}$ of the dihedral group $D_{2n}$.

\section{\textbf{Preliminaries}}
The general linear group $\GL_d(\C)$ acts naturally on the $d$-dimensional complex vector space $V_d$ with basis $\{v_1,\ldots,v_d\}$.
The coordinate functions $x_i:V_d\to {\mathbb C}$ are defined by
\[
x_i(\xi_1v_1+\cdots+\xi_dv_d)=\xi_i,\quad \xi_1,\ldots,\xi_d\in \C, \quad i=1,\ldots,d.
\]
This induces an action of $\GL_d(\C)$ on the algebra of polynomial functions $\C[X_d]={\mathbb C}[x_1,\ldots,x_d]$ via
\[
g(f):v\to f\big(g^{-1}(v)\big),\quad g\in \GL_d(\C),\, f(X_d)\in \C[X_d],\, v\in V_d.
\]
For a subgroup $G$ of $\GL_d(\C)$, the algebra of $G$-invariants consists of all polynomials fixed under this action of $G$, i.e.
\[
\C[X_d]^G=\big\{f\in \C[X_d]\mid g(f)=f\text{ for all }g\in G\big\}.
\]
For our purposes it is more convenient to assume that $\GL_d(\C)$ acts canonically on the vector space $KX_d$ with basis $X_d$ and extend its action diagonally to $\C[X_d]$ by
\[
g\big(f(x_1,\ldots,x_d)\big)=f\big(g(x_1),\ldots,g(x_d)\big),\quad g\in \GL_d(\C), f\in \C[X_d].
\]
The action of $\GL_d(\C)$ on the polynomial functions $\C[X_d]$ coincides
with the diagonal action of its opposite group $\GL_d^\op(\C)$ induced by the canonical action of $\GL_d^\op(\C)$ on the vector space $KX_d$.
Since the map  $g\to g^{-1}$ is an isomorphism of $\GL_d(\C)$ and $\GL_d^\op(\C)$, the invariant algebras obtained from these two actions are the same.

\subsection{Foundational Theorems}
The foundations of commutative invariant theory for finite groups $G$ rest on several classical theorems.

\begin{theorem}\label{Endlichkeitssatz} {\rm (Endlichkeitssatz of Emmy Noether \cite{No1})}
If $G$ is a finite subgroup $G$ of $\GL_d(K)$ and the field $K$ of characteristic zero, then the algebra of invariants $K[X_d]^G$ is finitely generated.
Moreover, its homogeneous generators can be chosen so that their degrees are bounded by the order of $G$.
\end{theorem}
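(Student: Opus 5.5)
The plan is Noether's classical argument via the Reynolds operator and power sums. Let $|G|=m$. Since $\operatorname{char}K=0$, we may divide by $m$, so the Reynolds operator
\[
\rho(f)=\frac{1}{m}\sum_{g\in G}g(f)
\]
is a $K$-linear projection of $K[X_d]$ onto $K[X_d]^G$. It preserves degree, and it is $K[X_d]^G$-linear: $\rho(hf)=h\rho(f)$ for every invariant $h$. Every monomial $x^a=x_1^{a_1}\cdots x_d^{a_d}$ lies in $K[X_d]$, and $K[X_d]^G=\rho(K[X_d])$. Hence $K[X_d]^G$ is spanned by the orbit sums $\rho(x^a)$. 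It therefore suffices to show that every $\rho(x^a)$ with $|a|>m$ is a polynomial in the $\rho(x^b)$ with $|b|\le m$. Since $\rho(x^b)$ has degree $|b|$, this gives finite generation and the degree bound at the same time.

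For each $g\in G$, write $y_{g,i}=g(x_i)$, a linear form. Introduce new indeterminates $u_1,\dots,u_d$ and the linear forms $U_g=u_1y_{g,1}+\cdots+u_dy_{g,d}$ in $K[X_d][u]$. Consider the power sums
\[
P_k=\sum_{g\in G}U_g^k .
\]
Expanding by the multinomial theorem, the coefficient of $u^a$ in $P_k$ (with $|a|=k$) is
\[
\binom{k}{a}\sum_{g}g(x^a)=\binom{k}{a}\,m\,\rho(x^a).
\]
In characteristic $0$ the multinomial coefficient is nonzero. So the $\rho(x^a)$ with $|a|=k$ are, up to nonzero scalars, exactly the $u$-coefficients of $P_k$.

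Now use Newton's identities for the $m$ quantities $U_g$. Every power sum $P_k$ with $k>m$ is a polynomial with rational coefficients in $P_1,\dots,P_m$. In fact, the elementary symmetric functions $e_1,\dots,e_m$ of the $U_g$ are polynomials in $P_1,\dots,P_m$, because we can divide by $1,\dots,m$ in characteristic zero. Then $P_k$ for $k>m$ is obtained from $P_1,\dots,P_m$ through the recursion involving $e_1,\dots,e_m$. Compare coefficients of $u^a$ on both sides of this identity in $K[X_d][u]$. The left side gives a nonzero multiple of $\rho(x^a)$. The right side gives a polynomial in the $u$-coefficients of $P_1,\dots,P_m$, that is, in the $\rho(x^b)$ with $|b|\le m$. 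This completes the reduction.

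The main obstacle, and the only subtle point, is the role of characteristic zero. It is needed in three places: to define $\rho$ (dividing by $m$), to invert the multinomial coefficients, and to express symmetric functions in terms of power sums (dividing by $k\le m$). Keeping track of these three divisions is the heart of the proof; the rest is bookkeeping of coefficients in $u$.
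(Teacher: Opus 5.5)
Your proof is correct. The paper gives no argument of its own for this theorem and only cites Noether's 1915 paper, and what you have written is essentially Noether's original proof from that source: the Reynolds operator, the orbit sums $\rho(x^a)$ realized as $u$-coefficients of the power sums $P_k=\sum_g U_g^k$, and Newton's identities expressing $P_k$ for $k>|G|$ through $P_1,\dots,P_{|G|}$, which gives exactly the degree bound $|G|$.
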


\begin{theorem}\label{Basissatz} {\rm (Basisssatz of Hilbert \cite{Hi1})}
Over an arbitrary field $K$ of any characteristic every ideal of $K[X_d]$ is finitely generated.
\end{theorem}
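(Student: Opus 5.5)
The plan is to reduce to the case of a single variable and then induct on the number of variables $d$, using the classical argument with leading coefficients. Let $R$ be a Noetherian commutative ring, meaning that every ideal of $R$ is finitely generated. I will first show that $R[x]$ is again Noetherian. Since a field $K$ has only the ideals $0$ and $K$, both generated by a single element, $K$ is Noetherian. Using the isomorphism $K[X_d]\cong K[X_{d-1}][x_d]$, induction on $d$ then gives the statement. The argument never uses the characteristic of $K$, which matches the claim that the theorem holds over fields of any characteristic.

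For the key step, fix an ideal $I$ of $R[x]$. For each $n\ge 0$, let $L_n\subseteq R$ consist of $0$ together with the leading coefficients of the elements of $I$ of degree exactly $n$.
\begin{enumerate}
\item Each $L_n$ is an ideal of $R$. Differences and $R$-multiples of such polynomials stay in $I$, so their leading coefficients stay in $L_n$ (or the result is $0$).
\item The ideals increase: $L_n\subseteq L_{n+1}$, since multiplying by $x$ keeps the leading coefficient and raises the degree by one.
\item The union $L=\bigcup_n L_n$ is an ideal of $R$, so it is finitely generated. Its finitely many generators already lie in some $L_N$, hence $L_n=L_N$ for all $n\ge N$.
\item Each $L_0,\ldots,L_N$ is finitely generated. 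For every $n\le N$ and every generator of $L_n$, choose a polynomial in $I$ of degree $n$ with that leading coefficient. Let $J\subseteq I$ be the ideal generated by this finite collection $F$.
\end{enumerate}

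To prove $J=I$, take $f\in I$ and induct on $\deg f$. Suppose $\deg f=n$ with leading coefficient $a\in L_n$.
\begin{enumerate}
\item If $n\le N$, write $a=\sum r_i a_i$, where the $a_i$ are the chosen generators of $L_n$ and $g_i\in F$ are the corresponding polynomials of degree $n$. Then $f-\sum r_i g_i$ lies in $I$ and has degree less than $n$.
\item If $n>N$, then $a\in L_N$, so $a=\sum r_i a_i$ with generators $a_i$ of $L_N$ and corresponding $g_i\in F$ of degree $N$. Then $f-\sum r_i x^{n-N}g_i$ lies in $I$ and has degree less than $n$.
\end{enumerate}
In both cases we subtract an element of $J$ and strictly lower the degree, so by induction $f\in J$. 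The base case is $f=0$, and constants are covered by the case $n=0\le N$.

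I expect the main obstacle to be the bookkeeping in this final step, specifically justifying the use of $L_N$ for all large degrees. That rests on the ascending-chain property: a finitely generated union of an increasing chain of ideals must stabilize. An alternative route is first to prove that finite generation of all ideals is equivalent to the ascending chain condition, and then argue by ACC directly. I would state this equivalence as a small lemma to keep the main argument clean.
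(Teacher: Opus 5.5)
Your proof is correct. It is the standard leading-coefficient argument that $R[x]$ is Noetherian whenever $R$ is, followed by induction on $d$ via $K[X_d]\cong K[X_{d-1}][x_d]$. It uses nothing about the characteristic, as the statement requires.

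The paper does not prove this statement. It quotes the Basissatz from Hilbert only as background, to conclude that $K[X_d]^G$ is finitely presented, so there is no argument in the paper to compare yours against.

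Two small remarks:
\begin{enumerate}
\item When you choose, for each generator $a$ of $L_n$, a polynomial in $I$ of degree exactly $n$ with leading coefficient $a$, take the generators nonzero. If $L_n=0$, take none. The element $0$ is not the leading coefficient of any polynomial of degree $n$.
\item The ascending-chain lemma you propose at the end is unnecessary. Your step 3 already derives the stabilization $L_n=L_N$ for all $n\ge N$ directly from the finite generation of $L=\bigcup_n L_n$.
\end{enumerate}
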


As a consequence of Theorems \ref{Endlichkeitssatz} and \ref{Basissatz} one immediately obtains that if $G$ is a finite group
and the set of polynomials $\{f_1(X_d),\ldots,f_n(X_d)\}$ generates $K[X_d]^G$,
then there are finitely many polynomials (syzygies) $w_1(Y_n),\ldots,w_m(Y_n)$ such that the factor algebra $K[Y_n]/(w_1(Y_n),\ldots,w_m(Y_n))$
of the polynomial algebra $K[Y_n]$ modulo the ideal generated by $w_1(Y_n),\ldots,w_m(Y_n)$ is isomorphic to $K[X_d]^G$.

\begin{theorem}\label{Chevalley-Shephard-Todd} {\rm (Chevalley \cite{Che} and Shephard and Todd \cite{ShT})}
If the field $K$ is of characteristic zero and $G$ is a finite subgroup of $\GL_d(K)$, then the algebra of invariants $K[X_d]^G$ is isomorphic to a polynomial algebra,
i.e. has a system of algebraically independent generators if and only if $G$ is generated by pseudo-reflections
(matrices of finite order conjugate to a diagonal matrix of the form $\mathrm{diag}(1,\dots,1,\xi)$, where $\xi\neq  1$ is a root of unity).
\end{theorem}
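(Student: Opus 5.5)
We prove the two implications of Theorem \ref{Chevalley-Shephard-Todd} separately. The common tool is the Reynolds operator $\pi(f)=\frac{1}{|G|}\sum_{g\in G}g(f)$, a $K[X_d]^G$-linear projection of $K[X_d]$ onto $K[X_d]^G$; characteristic zero makes $|G|$ invertible. The other tool is Molien's formula for the Hilbert series,
\[
H\big(K[X_d]^G,t\big)=\frac{1}{|G|}\sum_{g\in G}\frac{1}{\det(1-gt)}.
\]
We may extend scalars to the algebraic closure, since neither freeness of the invariant algebra nor the notion of pseudo-reflection changes under field extension. Let $R=K[X_d]^G$ and $R_+$ the ideal of invariants of positive degree.

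\emph{Sufficiency (Chevalley's argument).} Assume $G$ is generated by pseudo-reflections. By Theorem \ref{Basissatz}, the ideal $I=R_+K[X_d]$ is generated by finitely many homogeneous invariants $f_1,\dots,f_n$, chosen minimal. By a standard graded argument these also generate $R$ as an algebra: apply $\pi$ to $f=\sum h_if_i$ and induct on degree. It remains to show the $f_i$ are algebraically independent.

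Suppose not, and take a homogeneous relation $P(f_1,\dots,f_n)=0$ of minimal degree. Put $P_i=\partial P/\partial y_i(f_1,\ldots,f_n)\in R$; these are not all zero, since in characteristic zero the relation is not a $p$-th power. Differentiating with respect to $x_k$ gives $\sum_i P_i\,\partial f_i/\partial x_k=0$. Among the $P_i$ choose a minimal subset generating the ideal $(P_1,\dots,P_n)R$, and rewrite the relation using only that subset.

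The key lemma, which is the main obstacle, is the following: if $u_1,\dots,u_m\in R$ with $u_1\notin(u_2,\dots,u_m)R$ and $\sum a_iu_i=0$ with $a_i$ homogeneous polynomials, then $a_1\in I$. We prove it by induction on $\deg a_1$. For a pseudo-reflection $s$ with reflecting hyperplane $\ell=0$, the difference $a-s(a)$ is divisible by $\ell$. So $\Delta_s(a)=(a-s(a))/\ell$ has lower degree and satisfies the same relation. By induction $a_1-s(a_1)\in I$. Since $G$ is generated by such $s$, we get $a_1-g(a_1)\in I$ for all $g$, hence $a_1\equiv\pi(a_1)\in I$, using that $\pi(a_1)\in R_+$ or else a contradiction with $u_1\notin(u_2,\dots)$. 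The generation hypothesis enters exactly here. Applying the lemma and Euler's formula $\sum_k x_k\partial f_i/\partial x_k=(\deg f_i)f_i$ then expresses some $f_i$ in terms of the others, contradicting minimality.

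\emph{Necessity.} Assume $R=K[f_1,\dots,f_d]$ is polynomial with $\deg f_i=e_i$. We have $d$ generators because the transcendence degree of $R$ is $d$, as $K[X_d]$ is integral over $R$. Expand Molien's formula in Laurent series at $t=1$ and compare with $\prod(1-t^{e_i})^{-1}$. The leading term gives $\prod e_i=|G|$. The next term gives $\sum(e_i-1)=N$, the number of pseudo-reflections in $G$; only the identity and the pseudo-reflections contribute poles of order $\ge d-1$ at $t=1$. Now let $H\le G$ be the subgroup generated by the pseudo-reflections. By sufficiency, $K[X_d]^H$ is polynomial with degrees $e_i'$ satisfying $\prod e_i'=|H|$ and $\sum(e_i'-1)=N$, since $H$ contains the same pseudo-reflections as $G$. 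Because $R\subseteq K[X_d]^H$ are both polynomial algebras, a Jacobian/degree comparison gives $e_i'\le e_i$ after ordering. Equality of the sums $\sum(e_i-1)=\sum(e_i'-1)$ then forces $e_i=e_i'$, so $|G|=|H|$ and $G=H$. The Laurent expansion bookkeeping and the degree comparison are routine; the real difficulty of the whole proof is the lemma in the sufficiency part.
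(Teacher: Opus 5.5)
The paper gives no proof of this theorem. It is quoted in the preliminaries as a classical result of Chevalley and of Shephard--Todd and used only as background, so there is no internal argument to compare yours with. Your sketch is the standard textbook proof and is correct in outline:
\begin{itemize}
\item sufficiency uses Chevalley's divided-difference lemma together with the Reynolds operator;
\item necessity uses the Laurent expansion of Molien's series at $t=1$, which gives $\prod e_i=|G|$ and $\sum(e_i-1)=N$, followed by a degree comparison with the subgroup $H$ generated by the pseudo-reflections.
\end{itemize}

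A few points should be tightened.
\begin{itemize}
\item The $P_i=(\partial P/\partial y_i)(f_1,\dots,f_n)$ are not all zero for a different reason than the one you give; ``the relation is not a $p$-th power'' is not it. In characteristic zero some $\partial P/\partial y_i$ is a nonzero polynomial. It is weighted homogeneous of smaller degree than $P$, so by minimality of $\deg P$ it cannot vanish at $(f_1,\dots,f_n)$.
\item In the degree-zero case of your key lemma you only get $u_1\in(u_2,\dots,u_m)K[X_d]$. You need one application of the Reynolds operator to land in $(u_2,\dots,u_m)R$ and reach the contradiction.
\item The passage to the algebraic closure is unnecessary. Every step works over $K$, because a finite-order pseudo-reflection in $\GL_d(K)$ has characteristic polynomial $(t-1)^{d-1}(t-\xi)$ with coefficients in $K$. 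Hence $\xi\in K$, and the matrix is diagonalizable over $K$. If you keep the reduction, descending polynomiality from $\overline{K}$ to $K$ needs an argument, e.g.\ comparing $\dim R_+/R_+^2$ with the Krull dimension.
\item In the necessity part you use the degrees $e_i$, so you implicitly assume that an invariant algebra isomorphic to a polynomial ring is generated by algebraically independent \emph{homogeneous} elements. This holds because $\dim R_+/R_+^2=d$ and by graded Nakayama, but you should say so.
\end{itemize}
With these small repairs the argument is complete.
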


The symmetric and the dihedral groups are among the well known groups which satisfy the conditions of this theorem.

The algebra $K[X_d]^G$ decomposes into its homogeneous components:
\[
K[X_d]^G=K\oplus \left(K[X_d]^G\right)^{(1)} \oplus \left(K[X_d]^G\right)^{(2)} \oplus \cdots.
\]
Then the Hilbert (or Poincar\'e) series of $K[X_d]^G$ is the formal power series
\[
H\big(K[X_d]^G,t\big)=\sum_{n\geq 0}\dim \left(K[X_d]^G\right)^{(n)}t^n.
\]
According to the Hilbert-Serre theorem, this series is rational with a denominator of a very special form:
\[
H\big(K[X_d]^G,t\big) = p(t) \prod_{i=1}^m\frac{1}{1-t^{a_i}},\quad p(t)\in{\mathbb Z}[t].
\]
The explicit form of the Hilbert series of $K[X_d]^G$ is given by Molien in 1897.

\begin{theorem}\label{Molien formula} {\rm (Molien \cite{Mo})}
Over a field of characteristic zero the Hilbert series of the algebra of invariants $K[X_d]^G$ of a finite group $G$ is
\[
H\big(K[X_d]^G,t\big)=\frac{1}{\vert G\vert}\sum_{g\in G}\frac{1}{\det\big(1-gt\big)}.
\]
\end{theorem}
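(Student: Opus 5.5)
The plan is the standard averaging argument via the Reynolds operator, applied separately in each degree. Write $K[X_d]^{(n)}$ for the homogeneous component of degree $n$. Each $g\in G$ acts linearly on $K[X_d]^{(n)}$, which is finite dimensional and $G$-stable, so
\[
\dim \left(K[X_d]^G\right)^{(n)}=\dim \left(K[X_d]^{(n)}\right)^G.
\]
Since $\operatorname{char}K=0$, the Reynolds operator $\pi_n=\frac{1}{\vert G\vert}\sum_{g\in G}g$ is defined on $K[X_d]^{(n)}$. It is idempotent, because $h\pi_n=\pi_n$ for every $h\in G$, and its image is exactly the invariants: invariants are fixed by $\pi_n$, and everything in the image is invariant. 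The trace of an idempotent equals its rank, hence
\[
\dim \left(K[X_d]^{(n)}\right)^G=\tr \pi_n=\frac{1}{\vert G\vert}\sum_{g\in G}\tr\big(g\vert_{K[X_d]^{(n)}}\big).
\]
Summing over $n$ and interchanging the finite sum over $G$ with the sum over $n$ reduces the theorem to the identity
\[
\sum_{n\geq 0}\tr\big(g\vert_{K[X_d]^{(n)}}\big)t^n=\frac{1}{\det(1-gt)}
\]
for each fixed $g\in G$.

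To prove this identity, I first extend scalars to the algebraic closure $\overline{K}$. Neither traces nor determinants change under extension of scalars, and $K[X_d]^{(n)}\otimes\overline{K}=\overline{K}[X_d]^{(n)}$. Because $g$ has finite order and the characteristic is zero, its minimal polynomial divides $t^{\vert G\vert}-1$, which has distinct roots. So $g$ is diagonalizable over $\overline{K}$, and we may choose a basis $y_1,\dots,y_d$ of the span of the variables with $g(y_i)=\lambda_iy_i$. The action on polynomials is diagonal, so the monomials $y_1^{a_1}\cdots y_d^{a_d}$ with $a_1+\cdots+a_d=n$ form an eigenbasis of $\overline{K}[X_d]^{(n)}$ with eigenvalues $\lambda_1^{a_1}\cdots\lambda_d^{a_d}$. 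Therefore
\[
\sum_{n}\tr(g\vert_{(n)})t^n=\sum_{a}\prod_i(\lambda_it)^{a_i}=\prod_{i=1}^d\frac{1}{1-\lambda_it}=\frac{1}{\det(1-gt)}.
\]

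One subtlety is the convention from the Preliminaries. If one uses the action on polynomial functions, the operator on the variables is effectively $g^{-1}$ or its transpose. Transposing does not change the determinant. Replacing $g$ by $g^{-1}$ only permutes the terms of the sum over $G$, so the formula is the same either way.

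The main obstacle is mild: justifying the diagonalization, which is where finiteness of $g$'s order and characteristic zero enter, and keeping track of the action convention. Everything else is formal.
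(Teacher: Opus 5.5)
The paper gives no proof of this statement. It quotes Molien's formula as a classical result and uses it only as background, so there is no argument of the paper's to compare yours against.

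Your proposal is the standard proof, and it is correct and complete. In each degree, $\frac{1}{\vert G\vert}\sum_{g\in G}g$ is an idempotent whose image is the invariants, so its trace gives $\dim\left(K[X_d]^{(n)}\right)^G$. Each $g$ is diagonalizable over $\overline{K}$ because $t^{\vert G\vert}-1$ is separable in characteristic zero. The monomials in an eigenbasis then give
$\sum_n \tr\big(g\vert_{K[X_d]^{(n)}}\big)t^n=\prod_i(1-\lambda_it)^{-1}=\det(1-gt)^{-1}$.
Your handling of the $g$ versus $(g^{-1})^{T}$ convention is also right.

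The same template proves the Dicks--Formanek formula that the paper actually uses in Section 3. Replace the symmetric powers by the tensor powers $(KX_d)^{\otimes n}$. The trace of $g$ in degree $n$ becomes $\tr(g)^n$, and summing over $n$ gives $1/(1-\tr(g)t)$ in place of $1/\det(1-gt)$.
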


\subsection{Invariants of the cyclic group}
Now we summarize the facts for the invariants of the cyclic group $C_d$ of order $d$ needed for for our project.
We assume that $G=C_d$ is generated by $\rho$ which shifts cyclically the variables $x_1,\ldots,x_d$:
\[
\rho(x_1)=x_2,\rho(x_2)=x_3,\ldots,\rho(x_{d-1})=x_d,\rho(x_d)=x_1.
\]
Then for any field $K$ of characteristic zero $K[X_d]^{C_d}$ is spanned by all
\[
\frac{1}{d}\sum_{k=0}^{d-1}\rho^k(x_1^{n_1}\cdots x_d^{n_d}),\; n_i\geq 0.
\]
In the sequel we shall work over $K=\mathbb C$.
We shall change the basis $X_d$ of ${\mathbb C}X_d$ to
\[
y_k=x_1+\varepsilon^{-k}x_2+\varepsilon^{-2k}x_3+\cdots \varepsilon^{-k(d-1)}x_d,k=0,1,\ldots,d-1,
\]
where $\displaystyle \varepsilon=\cos\left(\frac{2\pi}{d}\right)+i\sin\left(\frac{2\pi}{d}\right)$ is a primitive $d$-root of 1.
Then
\[
\rho(y_k)=\varepsilon^ky_k, k=0,1,\ldots,d-1,
\]
\[
\rho(y_0^{n_0}y_1^{n_1}\cdots y_{d-1}^{n_{d-1}})=\varepsilon^{n_1+2n_2+\cdots+(d-1)n_{d-1}}y_0^{n_0}y_1^{n_1}\cdots y_{d-1}^{n_{d-1}}
\]
and ${\mathbb C}[X_d]^{C_d}={\mathbb C}[y_0,y_1,\ldots,y_{d-1}]^{C_d}={\mathbb C}[Y_d]^{C_d}$ is spanned by the monomials $y_0^{n_0}y_1^{n_1}\cdots y_{d-1}^{n_{d-1}}$ such that
$n_1+2n_2+\cdots+(d-1)n_{d-1}\equiv 0\text{ (mod }d)$.

The following statement is well known and follows easily from Theorem \ref{Endlichkeitssatz} of Emmy Noether.

\begin{proposition}\label{commutative case}
The algebra of invariants ${\mathbb C}[Y_d]^{C_d}$ is generated by $y_0$ and the monomials
$u=y_1^{n_1}\cdots y_{d-1}^{n_{d-1}}$ such that
\[
1\leq n_1+\cdots+n_{d-1}\leq d,n_1+2n_2+\cdots+(d-1)n_{d-1}\equiv 0 \text{ \rm (mod }d)
\]
and which cannot be presented as products of two or more polynomials of this kind.
\end{proposition}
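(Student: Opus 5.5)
The plan is to combine the monomial description of ${\mathbb C}[Y_d]^{C_d}$ given just before the statement with the degree bound of Theorem \ref{Endlichkeitssatz}. Since $\rho$ acts diagonally on $y_0,\ldots,y_{d-1}$, every invariant is a linear combination of invariant monomials $y_0^{n_0}y_1^{n_1}\cdots y_{d-1}^{n_{d-1}}$, namely those with $n_1+2n_2+\cdots+(d-1)n_{d-1}\equiv 0$ (mod $d$). Consequently ${\mathbb C}[Y_d]^{C_d}$ is the semigroup algebra of the additive monoid $M\subset{\mathbb N}_0^d$ of admissible exponent vectors, and a generating set of the algebra can be read off from a generating set of $M$.

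First I will split off $y_0$. The weight condition does not involve $n_0$, so an invariant monomial factors as $y_0^{n_0}\cdot u$, where $u=y_1^{n_1}\cdots y_{d-1}^{n_{d-1}}$ is itself invariant. Hence $y_0$ together with the invariant monomials in $y_1,\ldots,y_{d-1}$ generates the algebra. Among the latter, every monomial of positive degree is a product of \emph{indecomposable} ones, meaning invariant monomials of positive degree that are not products of two or more invariant monomials of positive degree. This follows by induction on the total degree: a decomposable monomial splits into invariant factors of strictly smaller degree. So the indecomposable invariant monomials in $y_1,\ldots,y_{d-1}$, together with $y_0$, generate ${\mathbb C}[Y_d]^{C_d}$. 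In the formulation of the statement, "cannot be presented as products of two or more polynomials of this kind" is taken to mean exactly this indecomposability.

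It remains to show that each indecomposable $u$ has degree at most $d$. Theorem \ref{Endlichkeitssatz} gives homogeneous generators $f_1,\ldots,f_m$ of degree $\leq |G|=d$. Write the indecomposable $u$ of degree $n>d$ as a polynomial in the $f_j$, and compare the coefficient of the monomial $u$ on both sides. Each $f_j$ is a linear combination of invariant monomials, so $u$ appears as a product of invariant monomials coming from a product of at least two $f_j$, because every $f_j$ has degree $\leq d<n$. Such a product is a factorization of $u$ into invariant monomials, each nonconstant, since the constant part of each $f_j$ may be taken to be zero. If any factor involved $y_0$ it could not divide $u$, so all factors lie in $y_1,\ldots,y_{d-1}$, and this contradicts indecomposability.

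Alternatively, I can argue directly with the zero-sum (Davenport constant) argument for ${\mathbb Z}/d$. Any sequence of more than $d$ residues contains a nonempty proper subsequence with zero sum, because among the prefix sums $s_0=0,s_1,\ldots,s_d$ two coincide. Applied to the multiset of weights of the variables in $u$, this splits $u$ into two invariant monomials of positive degree whenever $n>d$. The only delicate point is making the coefficient comparison rigorous in the Noether route, and the direct pigeonhole argument bypasses that step, so I would present the pigeonhole argument as the proof.
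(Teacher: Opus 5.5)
Your proof is correct, but your main argument takes a different route from the paper's. The paper gives no detailed proof. It only says the statement follows from the monomial description of ${\mathbb C}[Y_d]^{C_d}$ together with Noether's degree bound $|G|=d$ (Theorem \ref{Endlichkeitssatz}).

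That route silently needs the step in your third paragraph. Noether only guarantees \emph{some} homogeneous generating set in degrees $\leq d$. To transfer this to the indecomposable invariant monomials, one compares the coefficient of $u$ in an expression of $u$ through those generators. This works because, under a diagonal action, a polynomial is invariant if and only if each of its monomials is.

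Your prefix-sum argument bypasses Noether's theorem altogether. For an invariant monomial $u$ of degree $n>d$, two of the partial sums $s_0,\ldots,s_d$ of the weights coincide modulo $d$. This gives a nonempty block of length at most $d<n$ with zero weight. Since the total weight of $u$ is zero, the complementary factor is also invariant and of positive degree.

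What each route buys: your argument is self-contained and purely combinatorial, and it works directly on monomials, so no coefficient comparison is needed. It also identifies the bound $d$ as the Davenport constant of $\mathbb{Z}/d$, which is sharp because $y_1^d$ is indecomposable. The Noether route, by contrast, is the one that applies to arbitrary finite groups, where no monomial basis of invariants is available.

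Your reading of indecomposability is also harmless. Any factor in a nontrivial factorization of an invariant monomial of degree $\leq d$ has degree $<d$, so it automatically satisfies the degree condition of the statement.
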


In other words, we consider the multiplicative semigroup $[Y_d]=[y_0,y_1,\ldots,y_{d-1}]$ of all monomials in $d$ variables. Then the algebra ${\mathbb C}[Y_d]^{C_d}$ of invariants of $C_d$
is spanned by the subsemigroup of all monomials
\[
y_0^{n_0}y_1^{n_1}\cdots y_{d-1}^{n_{d-1}}\text{ with }n_1+2n_2+\cdots+(d-1)n_{d-1}\equiv 0 \text{ \rm (mod }d),
\]
and is generated by the monomials which generate this subsemigroup.

\begin{example}\label{invariants with diagonal action}
(i) $d=3$. All monomials $y_1^{n_1}y_2^{n_2}$, $n_1+n_2\leq 3$, $n_1+2n_2 \equiv 0 \text{ \rm (mod }3)$ are
$y_1y_2,y_1^3,y_2^3$ and ${\mathbb C}[y_0,y_1,y_2]^{C_3}$ is generated by $y_0,y_1y_2,y_1^3,y_2^3$.

(ii) $d=4$. The number of monomials $y_1^{n_1}y_2^{n_2}y_3^{n_3}$ of degree $2,3$ and $4$ and
such that $n_1+2n_2+3n_3 \equiv 0 \text{ \rm (mod }4)$ is equal, respectively, to $2, 2$ and $5$.
The monomials $y_1y_3,y_2^2,y_1^2y_2,y_2y_3^2,y_1^4,y_3^4$ together with $y_0$ generate
${\mathbb C}[y_0,y_1,y_2,y_3]^{C_4}$. The other $3$ monomials of degree $4$
\[
y_1^2y_3^2=(y_1y_3)^2,y_1y_2^2y_3=(y_1y_3)(y_2^2), y_2^4=(y_2^2)^2
\]
are products of the first two monomials $y_1y_3,y_2^2$.

(iii) $d=5$. It is easy to see that ${\mathbb C}[y_0,y_1,y_2,y_3,y_4]^{C_5}$ is generated by
\[
\deg=1:y_0;\quad\deg=2:y_1y_4,y_2y_3;\quad\deg=3:y_1^2y_3,y_1y_2^2;
\]
\[
\deg=4:y_1^3y_2,y_1y_3^3,y_2^3y_4,y_3y_4^3;\quad\deg=5:y_1^5,y_2^5,y_3^5,y_4^5.
\]
\end{example}

If $K$ is an arbitrary field of characteristic 0 and we consider the action of $C_d$ on $K[X_d]$
by cyclical shifting of the variables it is natural to start with the elementary symmetric polynomials
\[
e_n(x_1,\ldots,x_d)=\sum_{i_1<\cdots<i_n} x_{i_1}\cdots x_{i_n},\;n=1,\ldots,d,
\]
and then to extend them to a system of generators of $K[X_d]^{C_d}$ of the form
\[
x_1^{n_1}x_2^{n_2}\cdots x_{d-1}^{n_{d-1}}x_d^{n_d}+x_2^{n_1}x_3^{n_2}\cdots x_d^{n_{d-1}}x_1^{n_d}+\cdots+x_d^{n_1}x_1^{n_2}\cdots x_{d-2}^{n_{d-1}}x_{d-1}^{n_{d-1}}.
\]
Alternatively, instead to start with the elementary symmetric polynomials, we can search for a generating set which consists only of cyclic sums of monomials.

The case $d=2$ is trivial, because the algebra $K[X_2]^{C_2}$ coincides with the algebra of symmetric polynomials in two variables which is generated by $e_1,e_2$.
In the next example we give systems of generators of $K[X_d]^{C_d}$ for $d=3$ and 4.

\begin{example}\label{three and four commuting variables}
(i) $d=3$. By Example \ref{invariants with diagonal action} (i) we have to add one polynomial of degree $3$ to the three symmetric polynomials $e_1,e_2,e_3$.
There are two possibilities:
\[
u_1=x_1^2x_2+x_2^2x_3+x_3^2x_1,\quad u_2=x_1x_2^2+x_2x_3^2+x_3x_1^2.
\]
Since $u_1+u_2$ is a symmetric polynomial, $u_2$ can be expressed in terms of $e_1,e_2,e_3$ and $u_1$. Hence
$K[X_3]^{C_3}$ is generated by $e_1,e_2,e_3,u_1$. Pay attention that here $e_1, e_2, e_3$ are cyclic sums of $x_1$, $x_1x_2$ and $x_1x_2x_3$, respectively.

(ii) $d=4$. By Example \ref{invariants with diagonal action} (ii) the algebra $K[X_4]^{C_4}$ is generated by
$e_1,e_2,e_3,e_4$ and three polynomials of degree $2$, $3$ and $4$. Easy computations show that these three polynomials are
\[
u=x_1x_2+x_2x_3+x_3x_4+x_4x_1,
\]
\[
v=x_1^2x_2+x_2^2x_3+x_3^2x_4+x_4^2x_1,
\]
\[
w=x_1^3x_2+x_2^3x_3+x_3^3x_4+x_4^3x_1.
\]
The other cyclic sums of degree $\leq 4$ can be expressed in terms of $e_1,e_2,e_3,e_4,u,v,w$:
\[
x_1x_3+x_2x_4=e_2-u,
\]
\[
x_1^2x_3+x_2^2x_4+x_3^2x_1+x_4^2x_2=e_1e_2-e_3-e_1u,
\]
\[
x_1x_2^2+x_2x_3^2+x_3x_4^2+x_4x_1^2=-2e_3+e_1u-v,
\]
\[
x_1^3x_3+x_2^3x_4+x_3^3x_1+x_4^3x_2=e_1^2e_2-e_1e_3-2e_2^2+4e_4-ue_1^2+3ue_2-u^2,
\]
\[
x_1x_2^3+x_2x_3^3+x_3x_4^3+x_4x_1^3=ue_1^2-3ue_2+u^2-w,
\]
\[
x_1^2x_2^2+x_2^2x_3^2+x_3^2x_4^2+x_4^2x_1^2=-2e_1e_3+4e_4+2ue_2-u^2,
\]
\[
x_1^2x_3^2+x_2^2x_4^2=e_2^2-2e_4-2ue_2+u^2.
\]
It is easy to see that $K[X_4]^{C_4}$ is generated also by the cyclic sums of
\[
x_1,x_1x_2,x_1x_3,x_1^2x_2,x_1^2x_3,x_1^3x_2,x_1^3x_3.
\]
\end{example}

\subsection{Noncommutative Invariant Theory}
Now we go to invariant theory of groups acting on the free associative algebra $K\langle X_d\rangle$.
The general linear group $\GL_d(K) = \GL(KX_d)$ acts naturally on the vector space $KX_d$ with basis $X_d$. Via algebra homomorphisms, this action extends to $\KXd$:
\[
g(f(x_1,\cdots, x_d )) = f(g(x_1),\cdots, g(x_d)),\, g \in \GL_d , f(x_1,\cdots, x_d ) \in \KXd.	
\]
Clearly, for every subgroup $G \leq \GL_d(K)$ the algebra of $G$-invariants $\KXd^G$ is the set of all polynomials unchanged by this action of the elements of $G$.

\subsection{Fundamental Results on Finite Generation}
The following results illustrate the strong divergence between commutative and noncommutative invariant algebras with respect to finite generation.
They were found independently by Dicks and Formanek \cite{DiFo} and Kharchenko \cite{Kh2} for finite groups and extended by Koryukin \cite{Kor} to arbitrary groups.

\begin{theorem}\label{noncommutative finite generation}{\rm(\cite{Kor})}
Let $G \leq \GL_d(K)$ over any field $K$, and let $KZ_m$ be the minimal subspace of $KX_d$ with $\KXd^G \subseteq K\langle Z_m\rangle$.
Then the algebra $\KXd^G$ is finitely generated if and only if $G$ acts on $KZ_d$ by scalar multiplication.
\end{theorem}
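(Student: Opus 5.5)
The proof has two directions. The ``if'' direction is a direct computation with monomials. The ``only if'' direction uses the freeness of $\KXd^G$ due to Lane and Kharchenko, and it contains the real difficulty. Throughout, $V=KZ_m$, and I read $KZ_d$ in the statement as $KZ_m$.

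\emph{Preliminary step.} I first show that $V$ is $G$-stable. If $f\in\KXd^G\subseteq K\langle Z_m\rangle$ and $g\in G$, then $g(f)=f$. Hence $\KXd^G\subseteq K\langle g(Z_m)\rangle$ as well. The intersection $K\langle V\rangle\cap K\langle g(V)\rangle$ equals $K\langle V\cap g(V)\rangle$; I would check this by choosing a basis of $KX_d$ adapted to $V\cap gV$, $V$ and $gV$. Minimality of $V$ then forces $g(V)=V$. Consequently $\KXd^G=K\langle Z_m\rangle^G$, and this algebra is graded by degree because $G$ acts linearly.

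\emph{Sufficiency.} Suppose $g$ acts on $V$ as the scalar $\chi(g)$. Then every monomial of degree $n$ in $Z_m$ is multiplied by $\chi(g)^n$. So the invariants are spanned by all monomials whose degree $n$ lies in $\{n:\chi(g)^n=1 \text{ for all } g\in G\}$. This set is a submonoid $q\mathbb{N}$ of $\mathbb{N}$.
\begin{itemize}
\item If $q>0$, the algebra is generated by the finitely many monomials of degree $q$, namely $m^q$ of them.
\item If $q=0$, the algebra is $K$ and $V=0$, which is trivially finitely generated.
\end{itemize}

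\emph{Necessity.} Here I argue by contradiction. Assume $R=K\langle Z_m\rangle^G$ is generated by homogeneous elements of degree $\le N$, while $G$ does not act by scalars on $V\neq 0$. By Lane and Kharchenko, $R$ is free on a homogeneous generating set $W$. The plan is to construct invariants of arbitrarily large degree that are indecomposable, i.e.\ not in $R_+^2$.

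The tool is the left derivation (contraction) map. For homogeneous $f$ of degree $n$, write $f=\sum_i z_i f_i$ and set $\partial f=\sum_i z_i\otimes f_i\in V\otimes K\langle Z_m\rangle_{n-1}$. This map is $G$-equivariant. I would use the following argument.
\begin{enumerate}
\item By minimality, the invariants involve all of $V$. Since $G$ is not scalar on $V$, there exist an invariant $u$ and an element $\varphi\in V^*$ such that the contraction $\varphi(\partial u)$ is nonzero and is \emph{not} invariant. For instance, $\varphi$ can be chosen from a subspace on which some $g$ acts non-scalarly.
\item Form products $u^k$ of such invariants and replace one factor by a suitable non-invariant piece, so as to produce an invariant $h_k$ of degree growing with $k$.
\item Show that $h_k$ cannot be written as $\sum a_jb_j$ with $a_j,b_j$ invariants of positive degree. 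The idea is that every element of $R_+^2$ has all its left cofactor decompositions ``split'' at an invariant boundary, whereas $h_k$ has no such splitting.
\end{enumerate}

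I expect the main obstacle to be making the non-splitting argument in the last step rigorous. An alternative route is to use Hilbert series. If $W$ were finite, then $H(R,t)=1/(1-p(t))$ with $p$ a polynomial. On the other hand, a Molien-type averaging, applied first for finite (or reductive) $G$, gives
\[
H(R,t)=\frac{1}{|G|}\sum_{g\in G}\frac{1}{1-\tr(g)\,t}.
\]
This expression has poles at several distinct $1/\tr(g)$ unless every $g$ is scalar on $V$, which contradicts the form $1/(1-p(t))$. I would first try this Hilbert series route for finite $G$. For arbitrary $G$ I would then reduce to the Zariski closure, or fall back on the cofactor argument.
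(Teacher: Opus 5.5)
The paper gives no proof of this statement; it is quoted from Koryukin \cite{Kor}. Your preliminary step is correct: the identity $K\langle V\rangle\cap K\langle g(V)\rangle=K\langle V\cap g(V)\rangle$ also shows that the minimal subspace is well defined, and it gives the $G$-stability of $V=KZ_m$. Your sufficiency argument is also correct. The necessity direction, which is the real content of the theorem, has a genuine gap, because your Hilbert-series criterion is false in both directions:
\begin{itemize}
\item Several distinct poles are compatible with the form $1/(1-p(t))$. For the scalar group $\{\pm 1\}$ acting on $V$ with $\dim V=m$ you get $\frac{1}{2}\left(\frac{1}{1-mt}+\frac{1}{1+mt}\right)=\frac{1}{1-m^2t^2}$.
\item A non-scalar group can give a single pole. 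In the cyclic permutation action of $C_d$ studied in this paper, every $\rho^k\neq 1$ has trace $0$, and $H=\frac{1-(d-1)t}{1-dt}$ has only the pole $t=1/d$.
\end{itemize}
What finite generation actually forces is that $1/H(t)=1-p(t)$ be a polynomial, i.e.\ that $H$ have no zeros in $\mathbb{C}$. For $C_d$ it is the zero $t=1/(d-1)$, not the pole structure, that excludes finite generation. You would have to prove that $\frac{1}{|G|}\sum_{g}(1-\tr(g)t)^{-1}$ has a zero whenever some $g$ is not scalar on $V$, and your proposal does not address this.

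Even after that repair, the Hilbert-series route covers only finite $G$ in characteristic $0$, where the Dicks--Formanek formula is available. The statement concerns arbitrary $G\le\GL_d(K)$ over an arbitrary field, and there no Molien-type formula is at hand. The Zariski closure of an infinite $G$ may be non-reductive (for example unipotent), and when $\mathrm{char}(K)$ divides $|G|$ there is no averaging at all. So the general case needs a structural argument, and your cofactor route is only an outline. In step 2, replacing one factor of $u^k$ by a non-invariant piece does not in general yield an invariant. Step 3, the indecomposability of $h_k$ modulo $R_+^2$, is exactly what has to be shown. One property such an argument can exploit is that $R$ is closed under one-sided division in $K\langle V\rangle$: if $0\neq a\in R$ and $ab\in R$, then $g(ab)=ab$ gives $a\,(g(b)-b)=0$, so $b\in R$, and symmetrically on the other side. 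As it stands, your proposal proves only the easy implication.
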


\begin{corollary}{(\cite{DiFo, Kh2})}
For finite $G \leq \GL_d(K)$, the algebra of invariants $\KXd^G$ is finitely generated if and only if $G$ is cyclic and consists of scalar matrices.
\end{corollary}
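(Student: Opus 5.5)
The plan is to deduce the corollary directly from Theorem \ref{noncommutative finite generation} of Koryukin. It has two halves. The first is easy: finite scalar groups give finitely generated invariant algebras. The second carries the content: for a finite group $G$ the minimal subspace $KZ_m$ in Theorem \ref{noncommutative finite generation} is all of $KX_d$, so the "scalar on $KZ_m$" condition becomes "scalar on $KX_d$". Cyclicity is automatic in either direction, since a finite group of scalar matrices is isomorphic to a finite subgroup of $K^{\ast}$, and every finite subgroup of the multiplicative group of a field is cyclic.

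\emph{Sufficiency.} Let $G$ consist of scalar matrices. Then $G=\langle \xi I\rangle$, where $\xi$ is a primitive $n$-th root of unity. A monomial of degree $k$ is multiplied by $\xi^k$, and the group maps every homogeneous component to itself. Hence $\KXd^G$ is spanned by the monomials whose degree is divisible by $n$. It is therefore generated by the $d^n$ monomials of degree $n$, and so it is finitely generated. One can also simply quote the "if" direction of Theorem \ref{noncommutative finite generation} with $Z_m=X_d$.

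\emph{Necessity.} Assume $\KXd^G$ is finitely generated. By Theorem \ref{noncommutative finite generation}, $G$ acts by scalars on $KZ_m$, so it suffices to show $KZ_m=KX_d$. Note that $KZ_m$ is $G$-stable, since $g(\KXd^G)=\KXd^G\subseteq K\langle g(Z_m)\rangle$ and $KZ_m$ is minimal. For $v\in KX_d$ with $|G|=n$, I will consider the symmetrized orbit element
\[
N(v)=\sum_{\sigma\in \mathrm{Sym}_n}g_{\sigma(1)}(v)\cdots g_{\sigma(n)}(v),\qquad G=\{g_1,\ldots,g_n\}.
\]
It is $G$-invariant, because each $h\in G$ only permutes the factors. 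If $N(v)\neq 0$, then every factor $g_i(v)$, in particular $v$ itself, lies in $KZ_m$. To see this, take a basis of $KX_d$ extending a basis of $KZ_m$ and look at a monomial of $N(v)$ in which a variable outside $Z_m$ occurs. Thus it suffices to produce enough $v$ with $N(v)\neq 0$ to span $KX_d$.

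The main obstacle is showing $N(v)\neq 0$ over a field of arbitrary characteristic, where the commutative specialization $n!\prod_g g(v)$ may vanish. My first attempt is to compute in the tensor algebra $V^{\otimes n}$. Choose $v$ with trivial stabilizer, which is possible at least over an infinite field since $G$ is finite and acts faithfully. Then order a monomial basis lexicographically and show that some monomial of $N(v)$ has coefficient a sum of distinct-permutation contributions that do not cancel. If this is delicate, the fallback is a different nonzero invariant in which $v$ visibly appears, for instance the ordered-product orbit sum $\sum_{h\in G}h\big(g_1(v)\cdots g_n(v)\big)$. Its leading term can be controlled when the $g_i(v)$ are chosen in general position. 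The finite-field case would be handled by extending scalars, since invariants and the subspace $KZ_m$ behave well under field extension.
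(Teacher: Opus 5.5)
Your overall route is how the paper positions the statement: reduce to Theorem \ref{noncommutative finite generation} by showing $KZ_m=KX_d$, then use that finite subgroups of $K^{*}$ are cyclic. The paper gives no proof of its own, only the citation. Your sufficiency argument is fine. So is the $G$-stability of $KZ_m$, given that the minimal subspace is unique, which holds because $K\langle U_1\rangle\cap K\langle U_2\rangle=K\langle U_1\cap U_2\rangle$.

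The gap is the key implication ``$N(v)\neq 0\Rightarrow v\in KZ_m$''. Looking at a monomial of $N(v)$ that contains a variable outside $Z_m$ proves nothing. The coefficients of all such monomials may cancel while $N(v)$ stays nonzero through its $K\langle Z_m\rangle$-part. What you actually need is $N(v)\notin K\langle Z_m\rangle$ whenever $v\notin KZ_m$. So your programme of producing many $v$ with $N(v)\neq 0$ (trivial stabilizers, lexicographic leading monomials) aims at the wrong target. Even a complete version of it would not finish the proof.

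In characteristic $0$, the standing assumption of the paper, the repair is short and needs no general position. Fix a complement $W$ of $KZ_m$ in $KX_d$. Let $\theta\colon K\langle X_d\rangle\to K[X_d]$ be the algebra homomorphism extending the linear map $z+w\mapsto w$ ($z\in KZ_m$, $w\in W$), i.e. kill $KZ_m$ and then commutativize. Then $\theta$ vanishes on every positive-degree element of $K\langle Z_m\rangle$, while $\theta(N(v))=n!\,w_1\cdots w_n$, where $w_i$ is the $W$-component of $g_i(v)$. If $v\notin KZ_m$, then $G$-stability gives $w_i\neq 0$ for all $i$, so $\theta(N(v))\neq 0$ because $K[X_d]$ is a domain. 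This contradicts $N(v)\in K\langle X_d\rangle^G\subseteq K\langle Z_m\rangle$, and it works for every $v$.

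In positive characteristic this mechanism genuinely breaks. Take $G=C_p$ acting by $x_1\mapsto x_1$, $x_2\mapsto x_1+x_2$, with $W=Kx_2$. For $v=ax_1+bx_2$ every $w_i$ equals $bx_2$, so $\theta(N(v))=p!\,b^p x_2^p=0$. The variable $x_2$ shows up only in mixed components; for $p=2$, $N(x_2)=x_1x_2+x_2x_1$. Your outline does not control those components. So for arbitrary $K$, your fallback orbit sum and the extension-of-scalars remark are a plan rather than a proof. Either restrict the argument to characteristic $0$, or supply a genuine argument for the modular case.
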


\begin{corollary}{\rm(\cite{Kor})}
If $G$ acts irreducibly on $KX_d$, then $\KXd^G$ is either trivial or not finitely generated.
\end{corollary}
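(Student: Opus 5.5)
The plan is to derive the statement directly from Theorem~\ref{noncommutative finite generation}. Throughout I use that $d\geq 2$, as fixed in the paper; for $d=1$ every group acts by scalars and the statement fails. If $\KXd^G=K$, the invariant algebra is trivial and there is nothing to prove. So assume that there is a nonconstant invariant $f\in\KXd^G$, and let $KZ_m$ be the minimal subspace of $KX_d$ with $\KXd^G\subseteq K\langle Z_m\rangle$, as in Theorem~\ref{noncommutative finite generation}.

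\textbf{Step 1: the minimal subspace is well defined.} I will prove that for subspaces $W_1,W_2$ of $KX_d$ one has $K\langle W_1\rangle\cap K\langle W_2\rangle=K\langle W_1\cap W_2\rangle$. Choose a basis of $KX_d$ formed by a basis of $W_1\cap W_2$, completions of it to bases of $W_1$ and of $W_2$, and further vectors completing everything to a basis of $KX_d$. Then $\KXd$ is the free algebra on this basis, and its monomials in these letters form a vector space basis. A polynomial lies in $K\langle W_i\rangle$ exactly when only basis letters from $W_i$ occur in it. Hence a polynomial in both subalgebras uses only letters from $W_1\cap W_2$. It follows that the intersection of all subspaces $W$ with $\KXd^G\subseteq K\langle W\rangle$ is itself such a subspace, so $KZ_m$ is well defined.

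\textbf{Step 2: $KZ_m$ is $G$-stable and nonzero.} Let $g\in G$. Since $g$ acts by an automorphism of $\KXd$ mapping $K\langle W\rangle$ onto $K\langle gW\rangle$, and $g$ fixes every invariant, we get $\KXd^G=g(\KXd^G)\subseteq K\langle gKZ_m\rangle$. By Step 1, $\KXd^G\subseteq K\langle KZ_m\cap gKZ_m\rangle$. Minimality then forces $gKZ_m=KZ_m$. Moreover $KZ_m\neq 0$, because $f$ is nonconstant and $K\langle 0\rangle=K$.

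\textbf{Step 3: conclusion.} Since $G$ acts irreducibly on $KX_d$ and $KZ_m$ is a nonzero $G$-submodule, we get $KZ_m=KX_d$. By Theorem~\ref{noncommutative finite generation}, if $\KXd^G$ were finitely generated, then $G$ would act on $KX_d$ by scalar multiplication. In that case every line of $KX_d$ would be $G$-invariant, which contradicts irreducibility because $\dim KX_d=d\geq 2$. Therefore $\KXd^G$ is not finitely generated.

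The only nonformal point is the intersection property in Step 1. It is what guarantees that the minimal subspace exists and is $G$-stable, and the adapted-basis argument settles it.
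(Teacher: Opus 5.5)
Your argument is correct and is the same derivation the paper intends. The paper gives no separate proof and presents the statement as an immediate consequence of Koryukin's Theorem~\ref{noncommutative finite generation}; you fill in the routine details: $KZ_m$ is $G$-stable, so it equals $KX_d$ by irreducibility once the invariants are nontrivial, and a scalar action is incompatible with irreducibility for $d\geq 2$. Passing from pairwise to arbitrary intersections in Step~1 is immediate from $\dim KX_d<\infty$ (take a qualifying $W$ of minimal dimension), and your remark that the statement fails for $d=1$ is right.
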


The analog of the Chevalley-Shephard-Todd theorem for the free associative algebra $K\langle X_d\rangle$ also takes a substantially different form.

\begin{theorem}\label{free generation of noncommutative invariants}
{\rm (i) (Lane \cite{Lan}, Kharchenko \cite{Kh1})} For any $G \leq \GL_d(K)$ and any field $K$, the algebra $\KXd^G$ is free.
		
{\rm (ii) (Kharchenko \cite{Kh1})} For finite groups $G$, there exists a Galois correspondence
between free subalgebras of $\KXd$ containing $\KXd^G$ and subgroups of $G$:
a subalgebra $F$ with $\KXd^G \subseteq F$ is free if and only if $F = \KXd^H$ for some $H \leq G$.
\end{theorem}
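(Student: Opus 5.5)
We prove part (i) by showing that $A=\KXd^G$ is closed under the \emph{weak algorithm} in the sense of Cohn, and we prove part (ii) by Kharchenko's Galois-theoretic argument. Throughout we use the grading of $\KXd$ by degree. Each homogeneous component $\KXd^{(n)}$ is $G$-stable, because $G$ acts linearly on $KX_d$. Hence $A$ is a graded subalgebra,
\[
A=\bigoplus_{n\geq0}A^{(n)},\qquad A^{(n)}=\big(\KXd^{(n)}\big)^G .
\]
For a graded connected subalgebra $B$ of $\KXd$, Cohn's criterion says that $B$ is free provided it is \emph{right $1$-inert} in the following sense. Suppose $a_1,\ldots,a_k\in B$ are homogeneous of positive degree, $f_1,\ldots,f_k\in\KXd$, and $\sum a_if_i\in B$. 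Then there exist $b_i\in B$ with $\sum a_if_i=\sum a_ib_i$. Equivalently, one may require that the right $B$-module $\KXd$ has a decomposition compatible with left ideals. We will use the standard consequence: if $B$ is graded and $B_+\KXd\cap B=B_+$, and the natural map $B_+/B_+^2$ splits freely, then any homogeneous basis of a complement of $B_+^2$ in $B_+$ freely generates $B$. In practice we must prove that a minimal homogeneous generating set $\{u_j\}$ of $A$, chosen degree by degree as a basis of a complement of $(A_+^2)^{(n)}$ in $A^{(n)}$, satisfies no relation.

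The key step is the following. Let $\sum_j u_jf_j=0$ with $f_j\in A$ homogeneous, and assume this relation has minimal degree. Every element of $\KXd$ of positive degree is uniquely $\sum_{i}x_ig_i$, so $\KXd_+$ is a free right $\KXd$-module on $X_d$. We would like the same to hold inside $A$. To get this, use the fact that $\KXd$ is a free right module over itself on any minimal homogeneous generating set of a right ideal; this is the theorem that free algebras are firs, which we take as a black box from Cohn. Consider the right ideal $I=A_+\KXd$. It is $G$-stable and has a homogeneous basis. Choose this basis inductively by degree among elements of $A$: in degree $n$, take a complement of $(A_+\KXd)^{(n)}$ built from lower-degree generators, intersected with $A$. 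This is possible for finite or reductive $G$ by averaging. For arbitrary $G$, note that the space of new generators in degree $n$ is $I^{(n)}/\big(\sum_{m<n}I^{(m)}\KXd\big)$, a $G$-module whose $G$-fixed part is spanned by images of elements of $A$. Here the main obstacle appears: without a Reynolds operator we must argue directly that $I$ is generated by $A_+$ as a free right module on elements of $A$. We will handle it with Lane's trick, that is, by leading-term/linear-independence arguments. Linear independence of $\{u_j\}$ modulo $I^{(<n)}\KXd$ is equivalent to linear independence modulo $A_+^2$, and this equivalence follows by comparing a relation $\sum u_jf_j=0$ in $\KXd$ with the right freeness over $\KXd$. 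Once we know the $u_j$ form a right $\KXd$-basis of $I$, any relation $\sum u_jf_j=0$ with $f_j\in A$ forces $f_j=0$. Induction on degree then shows that monomials in the $u_j$ are linearly independent, so $A$ is free.

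For part (ii), let $G$ be finite and $A\subseteq F$ with $F$ free. First, if $H\leq G$, then $\KXd^H$ is free by (i), so every fixed algebra is free. Conversely, set $H=\{g\in G: g|_F=\mathrm{id}\}$, so that $F\subseteq\KXd^H$. To show equality, we compare Hilbert series, or ranks over $A$. Kharchenko's argument uses that $\KXd$ has a classical ring of quotients (a skew field) on which $G$ acts. In this skew field, Galois theory of skew fields for finite outer groups gives $Q(F)=Q(\KXd)^H$. Freeness of $F$ then yields $F=Q(F)\cap\KXd=\KXd^H$. This last step uses that a free subalgebra containing $A$ is integrally closed in an appropriate sense, and that is the delicate point, which we expect to be the main obstacle of (ii).
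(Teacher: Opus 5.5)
The paper gives no proof of this theorem; it is quoted from Lane and Kharchenko. Your proposal therefore has to stand on its own, and in both parts the decisive step is named but not carried out.

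In (i) your skeleton is right: show that a minimal homogeneous generating set $\{u_j\}$ of $A=\KXd^G$ is a free right basis of $I=A_+\KXd$, then induct on degree. But the step you defer to ``Lane's trick'' is the entire proof. Your claimed equivalence, independence modulo $\sum_{m<n}I^{(m)}\KXd$ versus independence modulo $A_+^2$, is trivial in only one direction. Averaging is not available either, since $G$ and $K$ are arbitrary; even for finite $G$ there is no Reynolds operator when $\mathrm{char}\,K$ divides $\vert G\vert$. The missing idea is short:
(a) Since $A_+^2$ and every $A^{(m)}\KXd$ with $m<n$ lie in $\sum_{m<n}I^{(m)}\KXd$, the whole space $I^{(n)}/\big(\sum_{m<n}I^{(m)}\KXd\big)^{(n)}$ is spanned by the images of the $u_j$ of degree $n$. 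This holds for the whole space, not just its $G$-fixed part.
(b) So you can choose, degree by degree, a subset $S\subseteq\{u_j\}$ that minimally generates $I$. Because $\KXd$ is a fir, $S$ is a free right basis of $I$ consisting of invariants.
(c) If $a\in A$ and $a=\sum_{s\in S}sf_s$, applying $g\in G$ gives $a=\sum_{s\in S}s\,g(f_s)$. Uniqueness of coefficients then forces every $f_s\in A$.
(d) Apply (c) to some $u_j\notin S$ of degree $n$. You get $u_j\in\mathrm{span}\{s\in S:\deg s=n\}+A_+^2$, which contradicts minimality of $\{u_j\}$ modulo $A_+^2$.
Hence $S=\{u_j\}$, and your final induction goes through.

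In (ii) the ``if'' direction follows from (i), but the converse is not established.
First, for $d\geq 2$ the free algebra is not an Ore domain, since $x_1\KXd\cap x_2\KXd=0$, so it has no classical ring of quotients. You would have to pass to the universal field of fractions (or a Martindale-type quotient ring) and extend the action of $G$. You would then have to prove that the action is outer, and that the division ring generated by $\KXd^G$ is the full fixed division ring, before any Galois theory of division rings applies.
More seriously, suppose you grant $Q(F)=Q(\KXd)^H$. The remaining equality $F=Q(F)\cap\KXd$ says that a \emph{free} subalgebra containing $\KXd^G$ is closed in this sense. That is precisely the nontrivial content of Kharchenko's correspondence, and you assert it without argument (you flag it yourself as the main obstacle). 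Without it nothing rules out a free $F$ with $\KXd^G\subseteq F\subsetneq\KXd^H$, so (ii) remains unproved.
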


The analogue of the formula of Molien for the Hilbert series of the algebra of invariants is expressed in the following elegant way.

\begin{theorem} [\label{noncommutatiev Molien formula}{Dicks and Formanek~{\cite{DiFo}}}]
For a finite group $G \leq \GL_d(K)$ with $\mathrm{char}(K) = 0$,
\[
H\big(K\langle X_d\rangle^G,t\big)=\frac{1}{\vert G\vert}\sum_{g\in G}\frac{1}{1-\tr(g)t}.
\]
\end{theorem}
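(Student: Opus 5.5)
The formula follows from averaging over $G$: the Reynolds projection onto the invariants has trace equal to the dimension of the invariant subspace, and on degree-$n$ tensors the trace of each group element is $\tr(g)^n$. Write $K\langle X_d\rangle=\bigoplus_{n\geq 0}K\langle X_d\rangle^{(n)}$, where $K\langle X_d\rangle^{(n)}$ is the homogeneous component of degree $n$. Each component is $G$-stable, since $G$ acts by linear substitutions of the variables. Hence $K\langle X_d\rangle^G=\bigoplus_n \big(K\langle X_d\rangle^{(n)}\big)^G$, and it suffices to show that
\[
\dim\big(K\langle X_d\rangle^{(n)}\big)^G=\frac{1}{\vert G\vert}\sum_{g\in G}\tr(g)^n
\]
for every $n\geq 0$. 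Summing these dimensions against $t^n$ and exchanging the finite sum over $G$ with the sum over $n$ then gives $\frac{1}{\vert G\vert}\sum_{g}\sum_n(\tr(g)t)^n=\frac{1}{\vert G\vert}\sum_g\frac{1}{1-\tr(g)t}$, as formal power series.

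First, identify $K\langle X_d\rangle^{(n)}$ with the tensor power $(KX_d)^{\otimes n}$ via $x_{i_1}\cdots x_{i_n}\mapsto x_{i_1}\otimes\cdots\otimes x_{i_n}$. The action $g(f)=f(g(x_1),\ldots,g(x_d))$ on degree-$n$ polynomials then becomes the diagonal action $g\otimes\cdots\otimes g$, because substituting into a monomial multiplies the images of the variables in the same order. Since the trace of a Kronecker product is the product of the traces, the trace of $g$ on $K\langle X_d\rangle^{(n)}$ is $\tr(g)^n$. This holds for the action of $g$ as a matrix on $KX_d$. If one uses the opposite (contragredient) convention of the Preliminaries instead, $g$ is replaced by $g^{-1}$. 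Summing over the whole group is insensitive to this, since $g\mapsto g^{-1}$ is a bijection of $G$.

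Second, use the Reynolds operator $\pi=\frac{1}{\vert G\vert}\sum_{g\in G}g$ on the finite-dimensional space $W=K\langle X_d\rangle^{(n)}$. This is the only place where $\mathrm{char}(K)=0$ is needed: $\vert G\vert$ must be invertible in $K$. One checks that $h\pi=\pi$ for all $h\in G$, so $\pi$ maps $W$ into $W^G$, and that $\pi$ restricts to the identity on $W^G$. Hence $\pi$ is an idempotent with image $W^G$. The trace of an idempotent equals its rank, read in $K$. In characteristic $0$ we have $\mathbb{Z}\subset K$, so the equality of integers $\dim W^G=\tr\pi=\frac{1}{\vert G\vert}\sum_g\tr(g)^n$ holds on the nose.

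There is no genuine obstacle; the argument is essentially the standard proof of Molien's formula (Theorem \ref{Molien formula}), with symmetric powers replaced by tensor powers. The one point needing care is the identification of the action on $K\langle X_d\rangle^{(n)}$ with $g^{\otimes n}$ and the resulting trace $\tr(g)^n$. This is exactly where the noncommutative case becomes simpler than the commutative one: no eigenvalue or determinant computation is needed, because $\sum_n \tr(g)^nt^n$ is directly a geometric series.
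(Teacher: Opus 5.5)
Your proof is correct and complete. The paper gives no proof of this theorem (it only quotes it from Dicks and Formanek), and your argument is the standard proof of their formula: you identify $K\langle X_d\rangle^{(n)}$ with $(KX_d)^{\otimes n}$ so that $g$ acts with trace $\tr(g)^n$, then read off $\dim\big(K\langle X_d\rangle^{(n)}\big)^G$ as the trace of the Reynolds idempotent.
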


For thorough expositions on this topic, see the surveys by Formanek \cite{Fo} and Drensky \cite{Dr1} and the paper \cite{BDDK}.

In the next section we shall use the following well known fact.

\begin{lemma}\label{generating function}
Let $Z=Z_1\cup Z_2\cup \cdots$ be a set of homogeneous variables such that $\deg(Z_n)=n$,
and $\vert Z_n\vert=g_n$, $n=1,2,\ldots$. Then the Hilbert series of the free algebra $K\langle Z\rangle$
with respect to the grading induced by the grading of the set $Z$ is
\[
H(K\langle Z\rangle,t)=\frac{1}{1-g(t)},\text{ where }g(t)=\sum_{n\geq 1}g_nt^n.
\]
\end{lemma}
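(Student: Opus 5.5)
The plan is to compare homogeneous components of both sides. Grade $K\langle Z\rangle$ by assigning to a monomial $z_{i_1}z_{i_2}\cdots z_{i_k}$ the degree $\deg z_{i_1}+\cdots+\deg z_{i_k}$. The monomials (words) in $Z$ form a basis of $K\langle Z\rangle$, and each of them is homogeneous. Hence $\dim K\langle Z\rangle^{(n)}$ equals the number $a_n$ of words of total degree $n$. Here $a_0=1$, counting the empty word. We must check that each $a_n$ is finite. This holds because a word of degree $n$ has length at most $n$, and there are finitely many letters of each degree ($g_j<\infty$). Infinitely many $g_j$ may be nonzero, but only the $g_j$ with $j\le n$ matter in degree $n$, so everything makes sense as formal power series.

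Next, split words by their first letter. A nonempty word of degree $n$ is uniquely a letter $z\in Z_j$ with $1\le j\le n$, followed by an arbitrary word of degree $n-j$. This gives the recurrence
\[
a_n=\sum_{j=1}^{n}g_ja_{n-j},\qquad n\ge 1,\quad a_0=1 .
\]
In terms of generating functions, with $A(t)=\sum_{n\ge0}a_nt^n$, the recurrence says $A(t)=1+g(t)A(t)$.

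Since $g(t)$ has zero constant term, $1-g(t)$ is invertible in $\mathbb{Z}[[t]]$, with inverse $\sum_{k\ge0}g(t)^k$. This sum is well defined because $g(t)^k$ involves only powers $t^m$ with $m\ge k$. Hence $A(t)=1/(1-g(t))$, which is the claimed formula for $H(K\langle Z\rangle,t)$.

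One can also argue directly. The coefficient of $t^n$ in $g(t)^k$ counts the words of length exactly $k$ and degree $n$. Summing over $k$ recovers $a_n$.

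No step is a real obstacle. The only point needing care is the formal-power-series bookkeeping when $Z$ is infinite: we need finiteness of each $g_n$ and the fact that $g(0)=0$, so that the geometric series converges $t$-adically.
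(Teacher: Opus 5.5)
Your proof is correct and complete. The paper gives no proof to compare against (it cites the lemma as a well-known fact), and your argument is the standard justification: words in $Z$ form a homogeneous basis, splitting off the first letter gives $A(t)=1+g(t)A(t)$, and $1-g(t)$ is invertible in $\mathbb{Z}[[t]]$ because $g(0)=0$. You also handle the bookkeeping point that each $g_n$ must be finite and only $g_j$ with $j\le n$ contribute in degree $n$.
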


\section{\textbf{The algebra $\CXdCd$ of invariants of the cyclic group}}
As in the commutative case we assume that $G=C_d$ is generated by $\rho$ which shifts cyclically the variables $x_1,\ldots,x_d$.
The next theorem gives the Hilbert series and information on the free generators of the algebra of invariants $K\langle X_d\rangle^{C_d}$.

\begin{theorem}\label{Hilbert series and free generators}
{\rm (i)} For any field $K$ of characteristic zero the Hilbert series of the algebra of invariants $K\langle X_d\rangle^{C_d}$ is
\[
H(K\langle X_d\rangle^{C_d},t)=\frac{1-(d-1)t}{1-dt}\text{ and }\dim\left((K\langle X_d\rangle^{C_d})^{(n)}\right)=d^{n-1},\; n=1,2,\ldots.
\]

{\rm (ii)} The generating function of the set of homogeneous free generators of the algebra $K\langle X_d\rangle^{C_d}$ is
\[
g(t)=\frac{t}{1-(d-1)t},
\]
i.e. for each $n\geq 1$ the system of homogeneous free generators has $(d-1)^{n-1}$ elements of degree $n$.
\end{theorem}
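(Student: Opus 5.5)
For part (i) I will apply the noncommutative Molien formula of Dicks and Formanek (Theorem \ref{noncommutatiev Molien formula}) to $G=C_d=\langle\rho\rangle$. The first step is to compute the traces of the elements of $C_d$ on $KX_d$. The identity has trace $d$. For $1\leq k\leq d-1$ the matrix of $\rho^k$ permutes the basis $x_1,\ldots,x_d$ with no fixed points, since a cyclic shift by $k\not\equiv 0 \pmod d$ fixes no index. Hence $\tr(\rho^k)=0$. The formula then gives
\[
H(K\langle X_d\rangle^{C_d},t)=\frac{1}{d}\left(\frac{1}{1-dt}+(d-1)\right)=\frac{1+(d-1)(1-dt)}{d(1-dt)}=\frac{1-(d-1)t}{1-dt}.
\]
Expanding $\frac{1}{1-dt}=\sum_n d^nt^n$, the coefficient of $t^n$ for $n\geq1$ is $d^n-(d-1)d^{n-1}=d^{n-1}$, which is the stated dimension. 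Alternatively, this can be checked directly: the monomials of degree $n$ form $d^n$ elements permuted freely by $C_d$, since no monomial is fixed by a nontrivial shift of the variables. So there are $d^{n-1}$ orbits, and their orbit sums form a basis. I would mention this as a sanity check valid over any field of characteristic zero.

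For part (ii) I will use Theorem \ref{free generation of noncommutative invariants} (i), which says that $K\langle X_d\rangle^{C_d}$ is free. Since the algebra is graded and connected, it is free on a homogeneous set of generators: take a graded complement of $\omega^2$ in the augmentation ideal $\omega$. I would state this standard fact briefly. By Lemma \ref{generating function}, if $g(t)$ is the generating function of such a free generating set, then $H=1/(1-g(t))$. Solving gives
\[
g(t)=1-\frac{1}{H}=1-\frac{1-dt}{1-(d-1)t}=\frac{t}{1-(d-1)t},
\]
whose coefficient of $t^n$ is $(d-1)^{n-1}$.

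The only point needing care is that the number of homogeneous free generators in each degree is well defined. This holds because $g_n=\dim(\omega/\omega^2)^{(n)}$, so $g(t)$ is independent of the choice of generators. Equivalently, $1/(1-g(t))$ determines $g(t)$ uniquely as a formal power series. I expect no real obstacle here; everything reduces to the trace computation together with the cited theorems.
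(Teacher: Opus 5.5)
Your proposal is correct and follows the paper's proof essentially step for step: for (i), the observation that $\tr(\rho^k)=0$ for $1\le k\le d-1$ and $\tr(\rho^0)=d$ is fed into the Dicks--Formanek formula, and for (ii), Lemma \ref{generating function} is solved for $g(t)$. Your two additions are correct and fill in points the paper leaves implicit: the direct orbit count (a free $C_d$-action on the $d^n$ monomials, giving $d^{n-1}$ orbit sums), and the well-definedness of $g_n=\dim(\omega/\omega^2)^{(n)}$ for a homogeneous free generating set.
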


\begin{proof}
(i) The matrix of the generator $\rho$ of $C_d$ with respect to the basis $X_d$ is
\[
\begin{pmatrix}
0& 0 &\dots &0&1\\
1& 0 &\dots &0&0\\
0& 1 &\dots &0&0\\
\vdots&\vdots&\ddots&\vdots&\vdots\\
0& 0 &\dots &1&0\\
\end{pmatrix}.
\]
Hence $\text{tr}(\rho^k)=0$ for $k=1,2,\ldots,d-1$, and $\text{tr}(\rho^0)=\text{tr}(I_d)=d$. Hence Theorem \ref{noncommutatiev Molien formula} gives
\[
H(K\langle X_d\rangle^{C_d},t)=\frac{1}{d}\sum_{k=0}^{d-1}\frac{1}{1-\text{tr}(\rho^k)t}=\frac{1}{d}\left(\frac{1}{1-dt}+(d-1)\right)
\]
\[
=\frac{1-(d-1)t}{1-dt}=1+\frac{t}{1-dt}=1+\sum_{n\geq 1}d^{n-1}t^n.
\]
The latter equality means that $\dim\left((K\langle X_d\rangle^{C_d})^{(n)}\right)=d^{n-1}$, $n=1,2,\ldots$.

(ii) The statement follows immediately from Lemma \ref{generating function} because
\[
H(K\langle X_d\rangle^{C_d},t)=\frac{1-(d-1)t}{1-dt}=\frac{1}{1-g(t)}
\]
implies that $\displaystyle g(t)=\frac{t}{1-(d-1)t}$.
\end{proof}

We shall change the basis of the vector space ${\mathbb C}X_d$ and shall work in the free associative algebra
${\mathbb C}\langle Y_d\rangle={\mathbb C}\langle y_0,y_1,\ldots,y_{d-1}\rangle$,
where $C_d$ acts on the generators $y_0,y_1,\ldots,y_{d-1}$ by
\[
\rho(y_k)=\varepsilon^ky_k,\; k=0,1,\ldots,d-1,\; \varepsilon=\cos\left(\frac{2\pi}{d}\right)+i\sin\left(\frac{2\pi}{d}\right).
\]

Now we shall describe a vector space basis of the algebra ${\mathbb C}\langle Y_d\rangle^{C_d}$ and a set of its free generators.

\begin{theorem}\label{description of algebra of invariants}
{\rm (i)} The algebra ${\mathbb C}\langle Y_d\rangle^{C_d}$ has a basis which consists of all monomials
$y_{i_1}\cdots y_{i_n}$ such that $i_1+\cdots+i_n\equiv 0\text{ \rm (mod }d)$.

{\rm (ii)} The set of all monomials from {\rm (i)}
such that for all $m=1,2,\ldots,n-1$ the beginning monomial $y_{i_1}\cdots y_{i_m}$ satisfies $i_1+\cdots+i_m\not\equiv 0\text{ \rm (mod }d)$
forms a set of free generators of ${\mathbb C}\langle Y_d\rangle^{C_d}$.
\end{theorem}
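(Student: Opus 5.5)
For (i) I will use that the $y_k$ form a basis of $\C X_d$ of eigenvectors of $\rho$, so the monomials $y_{i_1}\cdots y_{i_n}$ form a basis of $\C\langle Y_d\rangle$ consisting of eigenvectors of $\rho$. Since $\rho$ acts by an algebra automorphism,
\[
\rho(y_{i_1}\cdots y_{i_n})=\varepsilon^{i_1+\cdots+i_n}y_{i_1}\cdots y_{i_n}.
\]
Write a polynomial $f$ as a linear combination of monomials. Then $\rho(f)=f$ if and only if every monomial occurring in $f$ has eigenvalue $1$. Since $\varepsilon$ is a primitive $d$-th root of unity, this means $i_1+\cdots+i_n\equiv 0 \pmod d$. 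Invariance under $\rho$ is the same as invariance under $C_d$, so (i) follows. As a check, the number of such monomials of degree $n$ is $d^{n-1}$: the letters $i_1,\ldots,i_{n-1}$ are free and $i_n$ is then determined. This agrees with Theorem \ref{Hilbert series and free generators} (i).

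For (ii), call an invariant monomial \emph{indecomposable} if no proper beginning $y_{i_1}\cdots y_{i_m}$ with $m<n$ is invariant. Let $Z$ denote the set of indecomposable monomials.

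The first step is generation. Every invariant monomial factors as a product of elements of $Z$. To see this, take the smallest $m$ such that $i_1+\cdots+i_m\equiv 0$. Then the beginning of length $m$ lies in $Z$, and the remaining tail again has index sum $\equiv 0$, so induction on the degree applies. By (i), $Z$ therefore generates $\C\langle Y_d\rangle^{C_d}$.

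The second step, which is the main point, is freeness. I need to show that distinct words in the elements of $Z$ give distinct monomials in $Y_d$, i.e. that the factorization is unique. Suppose $w=z_1\cdots z_r=z'_1\cdots z'_s$ with all $z_j,z'_j\in Z$. Both $z_1$ and $z'_1$ are invariant beginnings of the monomial $w$, so one is a beginning of the other. Indecomposability of the longer one forces them to be equal, and cancelling gives the claim by induction. Hence the monomials in $Z$ are linearly independent in $\C\langle Y_d\rangle$, which is exactly what freeness of the subalgebra generated by $Z$ requires.

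As a consistency check, $Z$ has $(d-1)^{n-1}$ elements of degree $n$. Indeed, for $n\ge 2$ one has $i_1\ne 0$, and each later partial sum must avoid $0$ modulo $d$, giving $d-1$ choices at each of the first $n-1$ positions, with the last letter then forced. This matches Theorem \ref{Hilbert series and free generators} (ii).
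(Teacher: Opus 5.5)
Your proof is correct. Part (i) is the same eigenbasis argument the paper uses, but for part (ii) you take a genuinely different route.

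The paper never proves unique factorization directly. It notes that $\mathbb{C}\langle Y_d\rangle^{C_d}$ is the semigroup algebra of the subsemigroup $Z$ of invariant monomials. It then appeals to the Lane--Kharchenko theorem (every $K\langle X_d\rangle^G$ is free) to conclude that $Z$ is a free semigroup. After that it only identifies the free generators as the indecomposable elements, i.e.\ the monomials with no invariant proper beginning.

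You instead prove freeness from scratch. The indecomposables generate by splitting at the first invariant prefix, and factorization is unique because they form a prefix code: two invariant prefixes of one word are comparable, and indecomposability of the longer forces equality.

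The paper's version is shorter once the general theorem is available. However, it leaves implicit the step from ``the semigroup algebra of $Z$ is free'' to ``$Z$ is a free semigroup''. That step needs, for example, that a minimal homogeneous generating set of a connected graded free algebra with finite-dimensional components generates it freely, applied to the indecomposable monomials. Your argument is elementary, self-contained and independent of Lane--Kharchenko. Combined with your two counts ($d^{n-1}$ invariant monomials and $(d-1)^{n-1}$ indecomposable ones in degree $n$), it even recovers Theorem~\ref{Hilbert series and free generators} over $\mathbb{C}$ without the Dicks--Formanek formula.

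One wording point: your uniqueness argument shows that distinct words in the elements of $Z$ give distinct, hence linearly independent, monomials of $\mathbb{C}\langle Y_d\rangle$. That is what freeness requires. Linear independence of the elements of $Z$ alone, which is what your sentence literally asserts, would not suffice.
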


\begin{proof}
(i) The generator $\rho$ of $C_d$ acts on the monomials of ${\mathbb C}\langle Y_d\rangle$ by
\[
\rho(y_{i_1}\cdots y_{i_n})=\varepsilon^{i_1+\cdots+i_n}y_{i_1}\cdots y_{i_n}
\]
and hence $y_{i_1}\cdots y_{i_n}\in{\mathbb C}\langle Y_d\rangle^{C_d}$ if and only if $i_1+\cdots+i_n\equiv 0\text{ \rm (mod }d)$.

(ii) The set $Z$ of the monomials from (i) forms a subsemigroup of the free noncommutative multiplicative semigroup $\langle Y_d\rangle$ generated by $Y_d$. By (i)
${\mathbb C}\langle Y_d\rangle^{C_d}$ is the semigroup algebra of the semigroup $Z$. Hence $Z$ is a free semigroup because the algebra ${\mathbb C}\langle Y_d\rangle^{C_d}$ is free.
If $y_{i_1}\cdots y_{i_n}\in{\mathbb C}\langle Y_d\rangle^{C_d}$ and for some beginning $y_{i_1}\cdots y_{i_m}$ we have
$i_1+\cdots+i_m\equiv 0\text{ \rm (mod }d)$, then a similar congruence $i_{m+1}+\cdots+i_n\equiv 0\text{ \rm (mod }d)$ holds for the ending
$y_{i_{m+1}}\cdots y_{i_n}$. Hence both $y_{i_1}\cdots y_{i_m}$ and $y_{i_{m+1}}\cdots y_{i_n}$ belong to ${\mathbb C}\langle Y_d\rangle^{C_d}$
and $y_{i_1}\cdots y_{i_n}=(y_{i_1}\cdots y_{i_n})(y_{i_{m+1}}\cdots y_{i_n})$ is not a free generator of ${\mathbb C}\langle Y_d\rangle^{C_d}$.
If $i_1+\cdots+i_m\not\equiv 0\text{ \rm (mod }d)$ for all $m=1,\ldots,n-1$, this means that $y_{i_1}\cdots y_{i_n}$ is a free generator of the semigroup $Z$
and the set of free generators of $Z$ is also a set of free generators of ${\mathbb C}\langle Y_d\rangle^{C_d}$.
\end{proof}

Below we give an algorithm how to construct a free generating set of the algebra ${\mathbb C}\langle Y_d\rangle^{C_d}$.

\begin{algorithm}\label{algorithm}
The following procedure allows to construct inductively a system of free generators of $K\langle Y_d\rangle^{C_d}$.

(i) Let $n=1$. There is only one monomial $y_0$ which satisfies the condition of Theorem \ref{description of algebra of invariants} (i).
We form the set $Z_1=\{y_0\}$ which consists of free generators of degree 1 and the set $U_1=\{y_1,\ldots,y_{d-1}\}$ of monomials of degree 1 which are not free generators.
Obviously $\vert Z_1\vert=1$ and $\vert U_1\vert=d-1$.

(ii) Let $n=2$. For each of the $d-1$ elements $y_i$ of $U_1$ there is exactly one element $y_j=y_{d-i}$ such that $y_iy_j\in {\mathbb C}\langle Y_d\rangle^{C_d}$.
We include these $y_iy_j$ in the set $Z_2$ of free generators of degree 2. Obviously $\vert Z_2\vert=d-1$. For each $y_i\in U_1$ there are $d-1$ variables $y_j$
such that $y_iy_j$ does not belong to ${\mathbb C}\langle Y_d\rangle^{C_d}$. We include these products $y_iy_j$ in the set $U_2$. Hence $\vert U_2\vert=(d-1)^2$.

(iii) Let $n>2$. By induction we have constructed the set $Z_{n-1}$ of monomials of degree $n-1$ which are free generators and the set $U_{n-1}$ of $(d-1)^{n-1}$ monomials
which are not in ${\mathbb C}\langle Y_d\rangle^{C_d}$ and do not have a beginning which is in ${\mathbb C}\langle Y_d\rangle^{C_d}$.
For each $y_{i_1}\cdots y_{i_{n-1}}\in U_{n-1}$ there is a single variable $y_j$ such that $y_{i_1}\cdots y_{i_{n-1}}y_j\in {\mathbb C}\langle Y_d\rangle^{C_d}$. We include these
$\vert U_{n-1}\vert=(d-1)^{n-1}$ monomials in the set $Z_n$ of generators of degree $n$, which agrees with Theorem \ref{Hilbert series and free generators} (ii).
The other $d-1$ monomials $y_{i_1}\cdots y_{i_{n-1}}y_k$, $k\not=j$,
have the property that neither they nor their beginnings belong to ${\mathbb C}\langle Y_d\rangle^{C_d}$. They form the set $U_n$ with $\vert U_n\vert=\vert U_{n-1}\vert(d-1)=(d-1)^n$.

In this way, step by step, we shall construct the free generating set of ${\mathbb C}\langle Y_d\rangle^{C_d}$.

(iv) For each of the $d^{n-1}$ monomials $y_{i_1}\cdots y_{n-1}\in {\mathbb C}\langle Y_d\rangle$ there is a unique $y_j$ such that $y_{i_1}\cdots y_{n-1}y_j\in {\mathbb C}\langle Y_d\rangle^{C_d}$
and this agrees with the equality $\dim\left((K\langle X_d\rangle^{C_d})^{(n)}\right)=d^{n-1}$ from Theorem \ref{Hilbert series and free generators} (i).
\end{algorithm}

\begin{remark}\label{different fields}
As in the commutative case, by Theorem \ref{Hilbert series and free generators} the dimension of $(K\langle X_d\rangle^{C_d})^{(n)}$
and the number of free generators of degree $n$ in $K\langle X_d\rangle^{C_d}$ do not depend on the field $K$.
Hence any basis of the $\mathbb Q$-vector space $\left({\mathbb Q}\langle X_d\rangle^{C_d}\right)^{(n)}$ is a basis also for $\left(K\langle X_d\rangle^{C_d}\right)^{(n)}$
Similarly, any basis of the factor space
\[
\left({\mathbb Q}\langle X_d\rangle^{C_d}\right)^{(n)}/\sum_{k=1}^{n-1}\left({\mathbb Q}\langle X_d\rangle^{C_d}\right)^{(k)}\left({\mathbb Q}\langle X_d\rangle^{C_d}\right)^{(n-k)}
\]
is a part of a free generating set of ${\mathbb Q}\langle X_d\rangle^{C_d}$ and hence also of $K\langle X_d\rangle^{C_d}$.
\end{remark}

\section{\textbf{The $S$-algebra} $(\C \langle Y_d \rangle^{C_d},\circ)$}
Till the end of the paper we consider the free algebra $K\langle X_d\rangle$ as an $S$-algebra,
equipping the homogeneous component $(K\langle X_d\rangle)^{(n)}$ of degree $n$ with the right action of the symmetric group $\text{\rm Sym}_n$
that permutes positions of the variables, namely
\[
(x_{i_1}\cdots x_{i_n})\circ \sigma=x_{i_{\sigma^{-1}(1)}}\cdots x_{i_{\sigma^{-1}(n)}},\;\sigma\in \text{\rm Sym}_n.
\]
As we have mentioned in the introduction, this action was introduced by Kharchenko \cite{Kh2} and investigated by Koryukin \cite{Kor}.
In this section we shall describe the generators of the $S$-algebra $(\C \langle Y_d \rangle^{C_d},\circ)$.

\begin{theorem}\label{cyclic S-invariants}
Let
\[
U=\{u_0=y_0,u_1,\ldots,u_k\},\;u_i=y_1^{n_{i,1}}\cdots y_{d-1}^{n_{i,d-1}},\;i=1,\ldots,k,
\]
be the set of generators of the algebra ${\mathbb C}[Y_d]^{C_d}$ from Proposition \ref{commutative case}.
Then the same monomials considered as elements of ${\mathbb C}\langle Y_d\rangle$ generate the algebra
$({\mathbb C}\langle Y_d\rangle^{C_d},\circ)$.
\end{theorem}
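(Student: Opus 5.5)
Since the action $\circ$ permutes positions of variables, it commutes with any linear change of variables applied letter by letter. In particular it commutes with the passage from $X_d$ to $Y_d$, so we may work in ${\mathbb C}\langle Y_d\rangle$ with the diagonal action of $C_d$. By Theorem \ref{description of algebra of invariants} (i), ${\mathbb C}\langle Y_d\rangle^{C_d}$ is spanned by the monomials $y_{i_1}\cdots y_{i_n}$ with $i_1+\cdots+i_n\equiv 0$ (mod $d$). The plan is therefore to show that every such monomial lies in the $S$-subalgebra generated by $U$, that is, it is obtained from products of elements of $U$ by applying permutations $\sigma\in\text{Sym}_n$ to homogeneous components. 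This set is closed under products and under $\circ$, and it is a subalgebra of the invariants, so it suffices to reach each invariant monomial.

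The key observation is that $\circ$ acts transitively on the monomials with a fixed multidegree. The monomials $y_{i_1}\cdots y_{i_n}$ with the same content, meaning the same number $n_j$ of occurrences of each $y_j$, form a single orbit under $\text{Sym}_n$. This holds because $w\circ\sigma$ simply rearranges the letters of $w$. In particular, every monomial of multidegree $(n_0,n_1,\ldots,n_{d-1})$ equals $(y_0^{n_0}y_1^{n_1}\cdots y_{d-1}^{n_{d-1}})\circ\sigma$ for a suitable $\sigma$. Here $y_0^{n_0}y_1^{n_1}\cdots y_{d-1}^{n_{d-1}}$ is the ordered word, viewed as an element of ${\mathbb C}\langle Y_d\rangle$. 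The invariance condition $n_1+2n_2+\cdots+(d-1)n_{d-1}\equiv 0$ (mod $d$) depends only on the content, so it is exactly the commutative condition.

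Next, I invoke Proposition \ref{commutative case}. The commutative monomial $y_0^{n_0}\cdots y_{d-1}^{n_{d-1}}$ lies in the subsemigroup spanning ${\mathbb C}[Y_d]^{C_d}$, and this subsemigroup is generated by $U$. Hence it is a product $u_{j_1}\cdots u_{j_r}$ of elements of $U$ in the commutative semigroup. Now form the same product $u_{j_1}\cdots u_{j_r}$ in ${\mathbb C}\langle Y_d\rangle$, taking each $u_i$ as the ordered word $y_1^{n_{i,1}}\cdots y_{d-1}^{n_{i,d-1}}$. The result is a noncommutative monomial with the same content, so by the orbit observation some $\sigma$ maps it to the prescribed invariant monomial $y_{i_1}\cdots y_{i_n}$. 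Hence every basis element is of the form $(u_{j_1}\cdots u_{j_r})\circ\sigma$, and it lies in the $S$-algebra generated by $U$.

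The only step requiring care, which I expect to be the main (though mild) obstacle, is the bookkeeping of the two actions. One must check that the $S$-algebra structure on ${\mathbb C}\langle X_d\rangle^{C_d}$ transfers to ${\mathbb C}\langle Y_d\rangle^{C_d}$: the change of basis is a graded automorphism commuting with every $\circ\sigma$, since it acts by the same linear map on each tensor factor of $(KX_d)^{\otimes n}$. One must also check that the commutative factorization from Proposition \ref{commutative case} is a factorization of monomials, not merely of polynomials. The latter holds because that proposition was phrased in terms of generators of the monomial subsemigroup. With these points in place, the argument above closes the proof.
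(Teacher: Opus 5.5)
Your proposal is correct and follows essentially the same route as the paper's proof. Both arguments reduce to the invariant monomial basis, factor the sorted commutative monomial as a product of elements of $U$ using Proposition~\ref{commutative case}, lift that product to ${\mathbb C}\langle Y_d\rangle$, and use transitivity of $\circ$ on words with the same content; the paper passes through the ordered word via two permutations $\sigma\tau$, while you use a single permutation, which is the same thing.
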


\begin{proof}
By Theorem \ref{description of algebra of invariants} (i) the algebra ${\mathbb C}\langle Y_d\rangle^{C_d}$ has a basis of all monomials
$y_{i_1}\cdots y_{i_n}$ such that $i_1+\cdots+i_n\equiv 0\text{ \rm (mod }d)$.
Hence it is sufficient to show that such monomials belong to the $S$-subalgebra of ${\mathbb C}\langle Y_d\rangle$ generated by the set $U$.
There is a $\sigma\in \text{\rm Sym}_n$ such that
\[
y_{i_1}\cdots y_{i_n}\circ\sigma=y_0^{n_0}y_1^{n_1}\cdots y_{d-1}^{n_{d-1}}
\]
and $n_1+2n_2+\cdots+(d-1)n_{d-1}\equiv 0\text{ \rm (mod }d)$. Working in ${\mathbb C}[Y_d]^{C_d}$,
\[
y_0^{n_0}y_1^{n_1}\cdots y_{d-1}^{n_{d-1}}=u_0^{n_0}u_1^{p_1}\cdots u_k^{p_k}
\]
for suitable $p_1,\ldots,p_k$. Going back to ${\mathbb C}\langle Y_d\rangle^{C_d}$ there is a $\tau\in \text{\rm Sym}_n$ such that
\[
y_0^{n_0}y_1^{n_1}\cdots y_{d-1}^{n_{d-1}}\circ\tau=u_0^{n_0}u_1^{p_1}\cdots u_k^{p_k}.
\]
Hence $y_{i_1}\cdots y_{i_n}\circ\sigma\tau=u_0^{n_0}u_1^{p_1}\cdots u_k^{p_k}$ and this shows that all monomials in the basis of
${\mathbb C}\langle Y_d\rangle^{C_d}$ belong to the $S$-subalgebra of ${\mathbb C}\langle Y_d\rangle^{C_d}$ generated by the set $U$.
\end{proof}

\begin{example}
It follows from Example \ref{invariants with diagonal action} that the following monomials generate the $S$-algebra
$({\mathbb C}\langle Y_d\rangle^{C_d},\circ)$ for $d=3,4,5$:

(i) $d=3$. The $S$-algebra $({\mathbb C}\langle Y_3\rangle^{C_3},\circ)$ is generated by $y_0,y_1y_2,y_1^3,y_2^3$.

(ii) $d=4$. The set $\{y_0, y_1y_3,y_2^2,y_1^2y_2,y_2y_3^2,y_1^4,y_3^4\}$ generates $({\mathbb C}\langle Y_4\rangle^{C_4},\circ)$.

(iii) $d=5$. The $S$-algebra ${\mathbb C}\langle Y_5\rangle^{C_5}$ is generated by
\[
y_0,y_1y_4,y_2y_3,y_1^2y_3,y_1y_2^2,y_1^3y_2,y_1y_3^3,y_2^3y_4,y_3y_4^3,y_1^5,y_2^5,y_3^5,y_4^5.
\]
\end{example}

\section{\textbf{The $S$-algebra} $(\C \langle X_3 \rangle^{C_3},\circ)$}
As in Example \ref{three and four commuting variables} it is interesting to find the generators of the $S$-algebra
$(K\langle X_d\rangle^{C_d},\circ)$ in terms of the canonical action of $C_d$ by shifting the variables.
The case $d=2$ is well understood and its investigation started in the paper by Wolf \cite{Wo}.

\begin{example}\label{three noncommutative variables}
We shall show that the $S$-algebra $(K\langle X_3\rangle^{C_3},\circ)$ is generated by the same cyclic sums as $K[X_3]^{C_3}$:
\[
v_1=x_1+x_2+x_3,\;v_2=x_1x_2+x_2x_3+x_3x_1,
\]
\[
v_{31}=x_1^2x_2+x_2^2x_3+x_3^2x_1,\;v_{32}=x_1x_2^2+x_2x_3^2+x_3x_1^2.
\]
The $S$-algebra $(K\langle Y_3\rangle^{C_3},\circ)$ is generated by $y_0,y_1y_2,y_1^3,y_2^3$, where
\[
y_0=x_1+x_2+x_3,\,y_1=x_1+\varepsilon^2x_2+\varepsilon x_3,\,y_2=x_1+\varepsilon x_2+\varepsilon^2x_3,\,\varepsilon=-\frac{1}{2}+i\frac{\sqrt{3}}{2}.
\]
Since $y_0=v_1$, it is sufficient to show that $y_1y_2,y_1^3,y_2^3$ belong to the $S$-subalgebra of $({\mathbb C}\langle X_3\rangle,\circ)$
generated by $v_1,v_2,v_{31},v_{32}$. Direct computations show that
\[
y_1y_2=v_1^2+(\varepsilon-1)v_2+(\varepsilon^2-1)v_2\circ(12),
\]
\[
y_1^3=v_1^3+(\varepsilon^2-1)v_{31}\circ(1+(23)+(13))+(\varepsilon-1)v_{32}\circ(1+(23)+(13)),
\]
\[
y_2^3=v_1^3+(\varepsilon-1)v_{31}\circ(1+(23)+(13))+(\varepsilon^2-1)v_{32}\circ(1+(23)+(13)).
\]
Direct computations also show that this is system is minimal. For example, we cannot express $y_1^3$ in terms of the operation $\circ$ by $v_1,v_2,v_{31}$.
\end{example}

When we consider the graded free subalgebras of $K\langle X_d\rangle$, the number of homogeneous free generators of a given degree is the same for all systems of free generators.
We do not know whether a similar fact holds for the number of generators of the $S$-subalgebras $(F,\circ)$ of $(K\langle X_d\rangle,\circ)$.
Since the action of Kharchenko-Koryukin commutes with the linear change of the basis of the vector space $KX_d$,
what is obvious, that a generating system of $(F,\circ)$ does not depend on the basis of $KX_d$.

\begin{conjecture}\label{conjecture}
Every two minimal generating systems of an $S$-subalgebra of $(K\langle X_d\rangle,\circ)$ have the same number of elements of a given degree.
\end{conjecture}

Example \ref{three noncommutative variables} confirms Conjecture \ref{conjecture} for the special case of $({\mathbb C}\langle X_3\rangle^{C_3},\circ)$
and two explicitly given generating systems.

\section*{Acknowledgements}

The first named author was partially supported  by the European Union-NextGenerationEU,
through the National Recovery and Resilience Plan of the Republic of Bulgaria, Project № BG-RRP-2.004-0008-C01.


\end{document}